  \theoremstyle{plain}
  \newtheorem{proposition}{Proposition}
  \newtheorem{theorem}{Theorem}
  \newtheorem{lemma}{Lemma}
  \theoremstyle{definition}
  \newtheorem{example}{Example}
\newtheorem{propositiononedual}{Proposition}{\bf}{\it}
\newtheorem{propositiontwodual}{Proposition}{\bf}{\it}
\newtheorem{propositionthreedual}{Proposition}{\bf}{\it}
\newcommand{\R}{\mathbb{R}}
\newcommand{\Sym}{\mathbb{S}}
\newcommand{\orthant}[2]{\R^{#1,(#2)}_+}
\newcommand{\psdcone}[2]{\Sym^{#1,(#2)}_+}
\newcommand{\soc}{Q}
\newcommand{\E}{\mathcal{E}}
\newcommand{\sumk}{s}
\newcommand{\SH}{\textup{SH}}
\newcommand{\psd}{\succeq}
\newcommand{\pd}{\succ}
\newcommand{\nsd}{\preceq}
\newcommand{\elem}{e}
\newcommand{\melem}{E}
\newcommand{\tr}{\textup{tr}}
\newcommand{\ones}{\mathbf{1}}
\DeclareMathOperator*{\diag}{\textup{diag}}
\newcommand{\Amap}{A}
\newcommand{\Bmap}{B}
\title{Polynomial-sized Semidefinite Representations of Derivative Relaxations of Spectrahedral Cones}
\author{James Saunderson \and Pablo A.~Parrilo\thanks{The authors are with the 
        Laboratory for Information and Decision Systems, Department of Electrical Engineering and Computer Science, 
    Massachusetts Institute of Technology, Cambridge MA 02139, USA\@. Email: \{jamess,parrilo\}@mit.edu.
This research was funded by the Air Force Office of Scientific Research under grants 
FA9550-11-1-0305 and FA9550-12-1-0287.}
}
\begin{document}
    \maketitle
    \begin{abstract}
        We give explicit polynomial-sized (in $n$ and $k$) semidefinite
        representations of the hyperbolicity cones associated with the
        elementary symmetric polynomials of degree $k$ in $n$ variables.  These
        convex cones form a family of non-polyhedral outer approximations of
        the non-negative orthant that preserve low-dimensional faces while
        successively discarding high-dimensional faces. 
        More generally we construct explicit semidefinite representations
        (polynomial-sized in $k,m$, and $n$) of the hyperbolicity cones
        associated with $k$th directional derivatives of polynomials of the
        form $p(x) = \det(\sum_{i=1}^{n}A_i x_i)$ where the $A_i$ are $m\times
        m$ symmetric matrices.  These convex cones form an analogous family of
        outer approximations to any spectrahedral cone. 
        Our representations allow us to use semidefinite programming to
        solve the linear cone programs associated with these convex
        cones as well as their (less well understood) dual cones.
    
        
\end{abstract}

    \section{Introduction}
   \label{sec:intro}
    Expressing convex optimization problems in conic form, as the minimization
    of a linear functional over an affine slice of a convex cone, has been an
    important method in the development of modern convex optimization theory.
    This abstraction is useful (at least from a theoretical viewpoint) because
    all that is difficult and interesting about the problem is packaged into
    the cone.  The conic viewpoint provides a natural way to organize classes
    of convex optimization problems into hierarchies based on whether the cones
    associated with one class can be expressed in terms of the cones associated
    with another class.  For example, semidefinite programming generalizes
    linear programming because the non-negative orthant is the restriction to
    the diagonal of the positive semidefinite cone.

    When faced with a convex cone the geometry of which is not well understood,
    we stand to gain theoretical insight as well as off-the-shelf optimization
    algorithms by representing it in terms of a cone with known geometric and
    algebraic structure such as the positive semidefinite cone. Terminology is
    attached to this idea, with a cone being \emph{spectrahedral} if it is a
    linear section (or `slice') of the positive semidefinite cone, and \emph{semidefinitely
    representable} if it is a linear projection of a spectrahedral cone. The
    efficiency of a semidefinite representation is also clearly important. If
    we can write a cone as the projection of a slice of the cone of $m\times m$
    positive semidefinite matrices, we say it has a semidefinite representation
    of \emph{size $m$}.  Many convex cones have been shown to be semidefinitely
    representable using a variety of techniques (see \cite{nemirovski2001lectures} as 
    well as the recent book \cite{frgbook} for contrasting methods and examples). 
    
    The classes of semidefinitely representable cones and spectrahedral cones
    are distinct \cite{ramana1995some}, with semidefinitely representable cones
    being perhaps more natural from the point of view of optimization.  A
    semidefinite representation of a cone suffices to express the associated
    cone program as a semidefinite program.  Furthermore, unlike spectrahedral
    cones, the class of semidefinitely representable cones is closed under
    duality~\cite[Proposition 3.2]{gouveia2011positive}.
    
    The \emph{hyperbolicity cones} form a family of convex cones (constructed
    from certain multivariate polynomials) that includes the positive
    semidefinite cone, as well as all homogeneous
    cones~\cite{guler1997hyperbolic}.  While it has been shown (by Lewis et
    al.~\cite{lewis2005lax} based on work of Helton and
    Vinnikov~\cite{helton2007linear}) that all three-dimensional hyperbolicity
    cones are spectrahedral, little is known about semidefinite representations
    of higher dimensional hyperbolicity cones. Furthermore while hyperbolicity
    cones have very simple descriptions, their dual cones are not well
    understood. 
    
    In this paper we give explicit, polynomial-sized semidefinite
    representations of the hyperbolicity cones known as the \emph{derivative
        relaxations} of the non-negative orthant, and the corresponding
    derivative relaxations of the positive semidefinite cone. These cones form
    a family of outer approximations to the orthant and positive semidefinite
    cones respectively with many interesting
    properties~\cite{renegar2006hyperbolic}. We obtain semidefinite
    representations of the derivative relaxations of spectrahedral cones as
    slices of the derivative relaxations of the positive semidefinite cone.

    \subsection{Hyperbolic polynomials and hyperbolicity cones}
    \label{sec:hyp-poly}
    A homogeneous polynomial $p$ of degree $m$ in $n$ variables is
    \emph{hyperbolic} with respect to $e\in \R^n$ if $p(e) \neq 0$ and if for
    all $x\in \R^n$ the univariate polynomial $t\mapsto p(x-te)$ has only real
    roots. G\r{a}rding's foundational work on hyperbolic polynomials
    \cite{garding1959inequality} establishes that if $p$ is hyperbolic with
    respect to $e$ then the connected component of $\{x\in \R^n: p(x) \neq 0\}$
    containing $e$ is an open convex cone. This cone is called the
    \emph{hyperbolicity cone} corresponding to $(p,e)$. We denote it by
    $\Lambda_{++}(p,e)$, and its closure by $\Lambda_{+}(p,e)$.
   
    Note that $p$ is hyperbolic with respect to $e$ if and only if $-p$ is
    hyperbolic with respect to $e$. As such we assume throughout that $p(e) >
    0$.  We can expand $p(x+te)$ as 
    \[ p(x+te) = p(e)\left[t^{m} + a_1(x)t^{m-1} + a_2(x)t^{m-2} +
        \cdots + a_{m-1}(x)t + a_m(x)\right]\]
    where the $a_i(x)$ are polynomials that are homogeneous of degree $i$.  There
    is an alternative description of the hyperbolicity cone $\Lambda_{+}(p,e)$ due
    to Renegar~\cite[Theorem 20]{renegar2006hyperbolic} as
    \begin{equation}
        \label{eq:coefcone}
        \Lambda_{+}(p,e) = \left\{x\in \R^n: a_1(x) \geq 0,\;\;a_2(x) \geq 0,\;\; 
            \ldots,\;\;a_m(x) \geq 0\right\}.
    \end{equation}
    We use this description of $\Lambda_+(p,e)$ throughout the paper.    
   \paragraph{Basic examples:}
\begin{itemize}
    \item The polynomial $p(x_1,x_2,\ldots,x_n) = x_1 x_2 \cdots x_n$ is
        hyperbolic with respect to $e=\ones_n:=(1,1,\ldots,1)$.  The associated
        closed hyperbolicity cone is the non-negative orthant, $\R_+^n$. Since
        \[ p(x+t\ones_n) = t^{n} + \elem_1(x)t^{n-1} + \cdots + \elem_{n-1}(x)t + \elem_n(x)\]
        where  $\elem_k(x) = \sum_{1\leq i_1<\cdots<i_k \leq n} x_{i_1}\cdots
        x_{i_k}$ is the elementary symmetric polynomial of degree $k$ in the
        variables $x_1,x_2,\ldots,x_n$, 
        \[ \Lambda_{+}(p,e) = \R^{n}_+ = \left\{x\in \R^n: \elem_1(x) \geq 0,\;\;
                \elem_2(x)\geq 0,\;\;\ldots,\;\;\elem_n(x)\geq 0\right\}.\]
    \item Let $X$ be an $n\times n$ symmetric matrix of indeterminates.  The
        polynomial $p(X) = \det(X)$ is hyperbolic with respect to $e=I_n$, the
        $n\times n$ identity matrix.  The associated closed hyperbolicity cone
        is the positive semidefinite cone, $\Sym_+^n$.  Since
        \[ p(X+tI_n) = t^{n} + \melem_{1}(X)t^{n-1} + \cdots + \melem_{n-1}(X)t + \melem_n(X)\]
        where the $\melem_k(X)$ are the coefficients of the characteristic polynomial of $X$, 
        \[ \Lambda_+(p,e) = \Sym_+^n = \left\{X: \melem_1(X)\geq 0,\;\;\melem_2(X)\geq 0,
                \;\;\ldots,\;\;\melem_n(X) \geq 0\right\}.\]
        Observe that $\melem_{k}(X):= \elem_k(\lambda(X))$ is the elementary
        symmetric polynomial of degree $k$ in the eigenvalues of $X$ so the
        positive semidefinite cone can also be described in terms of polynomial
        inequalities on the eigenvalues of $X$ as
        \[ \Sym_+^n = \left\{X: \elem_1(\lambda(X)) \geq 0,\;\;\elem_2(\lambda(X)) \geq 0,\;\;
                \ldots,\;\;\elem_n(\lambda(X)) \geq 0\right\}.\]
\end{itemize}

\subsection{Derivative relaxations}
\label{sec:deriv-relaxations}
If $p$ is hyperbolic with respect to $e$ then (essentially by Rolle's theorem)
the directional derivative of $p$ in the direction $e$, \emph{viz.} 
\[    p^{(1)}_e(x):= \left.\frac{d}{dt}p(x+te)\right|_{t=0}\] 
is also hyperbolic with respect to $e$, a construction that goes back to
G\r{a}rding \cite{garding1959inequality}. If $p$ has degree $m$, by repeatedly
differentiating in the direction $e$ we construct a sequence of polynomials
$p,p^{(1)}_e,p^{(2)}_e,\ldots,p^{(m-1)}_e$ each hyperbolic with respect to $e$.

The corresponding hyperbolicity cones can be expressed nicely in terms of
polynomial inequalities. Indeed if $p(x+te) = p(e)\left[t^m + \sum_{i=1}^{m}
    a_{i}(x)t^{m-i}\right]$ then differentiating $k$ times with respect to $t$
we see that 
\[ p_e^{(k)}(x+te) = p(e)\left[c_0a_{m-k}(x) + c_1a_{m-k-1}(x)t + 
        \cdots +c_{m-k}t^{m-k}\right]\] 
where $c_i = (k+i){!}/i{!} > 0$. By~\eqref{eq:coefcone} the corresponding
hyperbolicity cone is
\[\Lambda_+^{(k)}(p,e):=\Lambda_{+}(p^{(k)}_e,e) = 
    \{x\in \R^n:a_1(x)\geq 0,\;\; a_2(x)\geq 0,\;\;\ldots, \;\;a_{m-k}(x) \geq 0\}\]
and can be obtained from \eqref{eq:coefcone} by removing $k$ of the inequality
constraints. As a result, the hyperbolicity cones $\Lambda_+^{(k)}(p,e)$
provide a sequence of outer approximations to the original hyperbolicity cone
that satisfy 
\[  \Lambda_{+}(p,e) \subset \Lambda_{+}^{(1)}(p,e) \subset \cdots \subset
    \Lambda_{+}^{(m-1)}(p,e).\] 
The last of these, $\Lambda_{+}^{(m-1)}(p,e)$, is simply the closed half-space
defined by $e$.  The work of Renegar \cite{renegar2006hyperbolic} highlights
the many nice properties of this sequence of approximations.

Note that we abuse terminology by referring to the cones
$\Lambda_{+}^{(k)}(p,e)$ as  \emph{derivative relaxations} of the hyperbolicity
cone $\Lambda_{+}(p,e)$. The abuse is that $\Lambda_{+}^{(k)}(p,e)$ does not
depend only on the \emph{geometric} object $\Lambda_+(p,e)$ but on its
particular \emph{algebraic} description via $p$ and $e$. 

\paragraph{Examples:}
\begin{itemize}
    \item In the case of $p(x) = x_1x_2\cdots x_n = \elem_n(x)$ and
        $e=\ones_n$, we have that $p^{(k)}_e(x) = k{!}\elem_{n-k}(x)$.
        Consequently the $k$th derivative relaxation of the orthant, which we
        denote by $\orthant{n}{k}$, is the hyperbolicity cone
        $\Lambda_{+}(\elem_{n-k},\ones_n)$. It can be expressed as
\begin{align}
    \orthant{n}{k} & = \{x\in \R^n: 
        \elem_1(x) \geq 0,\;\; \elem_2(x) \geq 0,\;\; \ldots,
        \;\; \elem_{n-k}(x) \geq 0\}.\label{eq:coef-deriv}
\end{align}
Consistent with these descriptions we define $\orthant{n}{n} := \R^n$.
    \item In the case of $p(X) = \det(X) = \melem_n(X)$ and $e=I_n$, we have
        that $p^{(k)}_e(x) = k{!}\melem_{n-k}(X)$.  The $k$th derivative
        relaxation of the positive semidefinite cone, which we denote by
        $\psdcone{n}{k}$, can be described as
\begin{align}
    \psdcone{n}{k} & = \left\{X\in \Sym^n: 
        \melem_1(x) \geq 0,\;\; \melem_2(x) \geq 0,\;\; \ldots,\;\; 
        \melem_{n-k}(x) \geq 0\right\}\label{eq:psd-coef-deriv}\\
    & = \left\{X\in \Sym^n: \elem_1(\lambda(X)) \geq 0,\;\; 
        \elem_2(\lambda(X))\geq 0,\;\;\ldots,\;\;
        \elem_{n-k}(\lambda(X)) \geq 0\right\}.\label{eq:psd-coef-deriv-spec}
\end{align}
Again we define $\psdcone{n}{n} := \Sym^n$, the set of $n\times n$ symmetric
matrices.  Since $\melem_i(\diag(x)) = \elem_i(x)$ for all $i$, the diagonal
slice of $\psdcone{n}{k}$ is exactly $\orthant{n}{k}$.
\end{itemize}

\paragraph{Symmetry:}
Suppose $G$ is a group acting by linear transformations on $\R^n$ by $x\mapsto
g\cdot x$ for all $g\in G$.  Suppose \emph{both $p$ and $e$} are invariant
under the group action, i.e., $g\cdot e = e$ and $(g\cdot p)(x) :=
p(g^{-1}\cdot x) = p(x)$ for all $g\in G$. Then for all $t\in \R$, $x\in\R^n$
and $g\in G$
\[ p(x+te) = (g\cdot p)(x+te) = p(g^{-1}\cdot (x+te)) = p((g^{-1}\cdot x) + te).\]
Hence the hyperbolicity cone $\Lambda_+(p,e)$ and all of its derivative cones 
$\Lambda_+^{(k)}(p,e)$ are invariant under this same group action. 

For our purposes an important example of this is the symmetry of the cones
$\psdcone{n}{k}$.  The action of $O(n)$ by conjugation on symmetric matrices
leaves the polynomial $p(X) = \det(X)$ invariant \emph{and} preserves the
direction $e = I_n$. Hence all of the derivative relaxations of the positive
semidefinite cone are invariant under conjugation by orthogonal matrices.  As
such, the cones $\psdcone{n}{k}$ are \emph{spectral sets}, in the sense that whether a
symmetric matrix $X$ belongs to $\psdcone{n}{k}$ depends only on the
eigenvalues of $X$. This is evident from the description of $\psdcone{n}{k}$
in~\eqref{eq:psd-coef-deriv-spec}.

\subsection{Related work}
\label{sec:related}
Previous work has focused on semidefinite and spectrahedral representations of
the derivative relaxations of the orthant.  Zinchenko
\cite{zinchenko2008hyperbolicity} used a decomposition approach to give
semidefinite representations of $\orthant{n}{1}$ and its dual cone. Sanyal
\cite{sanyal2013derivative} subsequently gave spectrahedral representations of
$\orthant{n}{1}$ and $\orthant{n}{n-2}$ and conjectured that all of the
derivative relaxations of the orthant admit spectrahedral representations.

Recently Br{\"a}nd{\'e}n \cite{branden2014hyperbolicity} settled this
conjecture in the affirmative giving spectrahedral representations of
$\orthant{n}{n-k}$ for $k=1,2,\ldots,n-1$ of size $O(n^{k-1})$. For each $1\leq
k < n$  Br{\"a}nd{\'e}n constructs a graph $G_{n,k} = (V,E)$ together with edge
weights $(w_e(x))_{e\in E}$ that are linear forms in $x$ so that 
\begin{equation}
    \label{eq:branden}
    \orthant{n}{n-k} = \left\{x\in \R^n: L_{G_{n,k}}(x) \psd 0\right\}
\end{equation}
where $L_{G_{n,k}}(x)$ is the $|V|\times |V|$ edge-weighted Laplacian of
$G_{n,k}$.  Since $L_{G_{n,k}}(x)$ is linear in the edge weights, and the edge
weights are linear forms in $x$, \eqref{eq:branden} is a spectrahedral
representation of size $|V|$.  With the exception of two distinguished
vertices, the vertices of $G_{n,k}$ are indexed by all $\ell$-tuples (for
$1\leq \ell\leq k-1$) consisting of distinct elements of $\{1,2,\ldots,n\}$.
Hence $|V| = 2+\sum_{\ell=1}^{k-1}\ell{!}\binom{n}{\ell}$ showing that
Br{\"a}nd{\'e}n's spectrahedral representation of $\orthant{n}{n-k}$ has size
$O(n^{k-1})$. While Br{\"a}nd{\'e}n's construction is of considerable
theoretical interest, these representations (unlike ours) are not practical for
optimization due to their prohibitive size.

A spectrahedral representation of $\orthant{n}{1}$ is implicit in the work of
Choe et al.~\cite{choe2004homogeneous} that studies the relationships between
matroids and hyperbolic polynomials.  Choe et al.~observe that if $\mathcal{M}$
is a \emph{regular matroid} represented by the rows of a totally unimodular
matrix $V$ then $\det(V^T\diag(x)V)$ is the basis generating polynomial of
$\mathcal{M}$.  In particular, the uniform matroid $U_{n}^{n-1}$ is regular and
has $\elem_{n-1}(x)$ as its basis generating polynomial, yielding a symmetric
determinantal representation of $\elem_{n-1}(x)$ and hence a spectrahedral
representation of $\orthant{n}{n-1}$. 

From a computational perspective, G\"{u}ler \cite{guler1997hyperbolic} showed
that if $p$ has degree $m$ and is hyperbolic with respect to $e$ then $\log p$
is a self-concordant barrier function (with barrier parameter $m$) for the
hyperbolicity cone $\Lambda_+(p,e)$.  As such, as long as $p$ and its gradient
and Hessian can be computed efficiently, one can use interior point methods to
minimize a linear functional over an affine slice of $\Lambda_+(p,e)$
efficiently. Renegar \cite[Section 9]{renegar2006hyperbolic} gave an efficient
interpolation-based method for computing $p_e^{(k)}$ (and its gradient and
Hessian) whenever $p$ (and its gradient and Hessian) can be evaluated
efficiently. G\"{u}ler and Renegar's observations together yield efficient
computational methods to optimize a linear functional over an affine slice of a
derivative relaxation of a spectrahedral cone. Our results complement these,
giving a method to solve optimization problems of this type using existing
numerical procedures for semidefinite programming.

\subsection{Notation}

Here we define notation not explicitly defined elsewhere in the paper.  If $C$
is a convex cone, we denote by $C^{*}$ the dual cone, i.e.~the set of linear
functionals that are non-negative on $C$. We represent linear functionals on
$\R^n$ using the standard Euclidean inner product, and linear functionals on
$\Sym^n$ using the trace inner product $\langle X,Y\rangle = \tr(XY)$.  As such
$C^* = \{y: \langle y,x\rangle \geq 0,\;\;\text{for all $x\in C$}\}$.  If $X\in
\Sym^n$ let $\lambda(X)$ denote its eigenvalues sorted so that
$\lambda_1(X)\geq \lambda_2(X)\geq \cdots \geq \lambda_n(X)$.  If $X\in\Sym^n$
let $\diag(X)\in \R^n$ denote the vector of diagonal entries and if $x\in \R^n$
let $\diag(x)$ denote the diagonal matrix with diagonal entries given by $x$.
The usage will be clear from the context.

\section{Results}
\label{sec:results}
Our main contribution is to construct two different explicit polynomial-sized
semidefinite representations of the derivative relaxations of the positive
semidefinite cone.  We call our two representations the \emph{derivative-based}
and \emph{polar derivative-based} representations respectively.  In this section
we describe these representations, and outline the proof of our main
theoretical result.
\begin{theorem}
    \label{thm:main}
    For each positive integer $n$ and each $k=1,2,\ldots,n-1$, the cone
    $\psdcone{n}{k}$ has a semidefinite representation of size
    $O(\min\{k,n-k\}n^2)$.
\end{theorem}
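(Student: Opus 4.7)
The plan is to establish two distinct semidefinite representations of $\psdcone{n}{k}$: a \emph{derivative-based} representation of size $O((n-k)n^2)$ and a \emph{polar derivative-based} representation of size $O(k n^2)$. Selecting whichever is smaller for a given $k$ yields the $O(\min\{k,n-k\}n^2)$ bound in the theorem.

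The derivative-based representation starts from the characterization
\[ \psdcone{n}{k} = \{X \in \Sym^n : \melem_j(X) \geq 0 \text{ for } j=1,\ldots,n-k\}, \]
or equivalently the statement that the $k$th $t$-derivative of the real-rooted polynomial $q(t;X) := \det(tI + X)$ is nonnegative on $[0,\infty)$. I would build the semidefinite lift recursively: start with the standard size-$n$ representation of $\Sym^n_+ = \psdcone{n}{0}$ and, at each of $n-k$ subsequent steps, adjoin one block of size $O(n) \times O(n)$ encoding the passage from $\psdcone{n}{j-1}$ to $\psdcone{n}{j}$. The identity driving each step should express the new coefficient $\melem_{n-j}(X)$ via a Schur complement in a structured $O(n)$-sized block matrix involving $X$, an auxiliary variable in $\Sym^{O(n)}$, and a linear coupling reflecting $\melem_{n-j}(X) = (j!)^{-1}\, \partial_t^j \det(tI + X)\big|_{t=0}$. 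Iterating $n-k$ times gives total size $O((n-k)n^2)$.

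The polar derivative-based representation is obtained by dualizing. I would mirror the construction above on the dual side, giving a semidefinite representation of $(\psdcone{n}{k})^*$ of size $O(k n^2)$: starting from $(\Sym^n_+)^* = \Sym^n_+$ and adjoining $k$ blocks of size $O(n) \times O(n)$ that implement successive \emph{polar} derivative steps, the duality-adjoints of directional differentiation on hyperbolic polynomials. Since the class of semidefinitely representable cones is closed under duality \cite{gouveia2011positive}, dualizing the size-$O(kn^2)$ representation of $(\psdcone{n}{k})^*$ produces a size-$O(kn^2)$ representation of $\psdcone{n}{k} = ((\psdcone{n}{k})^*)^*$.

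The main obstacle will be establishing the correctness and explicit form of the single recursive step in each construction. Concretely, I must exhibit a structured $O(n) \times O(n)$ block matrix whose Schur complement (and linear coupling with $X$) encodes the jump from $\psdcone{n}{j-1}$ to $\psdcone{n}{j}$, and dually for the polar version. The combinatorial heart of the argument is thus a lemma asserting that a single directional-derivative step on $\det$ can be realized as a single PSD block via a suitable determinantal or Schur-complement identity; once such a lemma is in hand, iterating and polarizing produce both representations and complete the proof.
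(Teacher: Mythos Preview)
Your proposal has a genuine gap: the recursion you sketch is both directionally confused and lacks a concrete mechanism. You write that you would start from $\psdcone{n}{0}=\Sym^n_+$ and take ``$n-k$ subsequent steps'' from $\psdcone{n}{j-1}$ to $\psdcone{n}{j}$, but reaching $\psdcone{n}{k}$ from $j=0$ takes $k$ steps, not $n-k$; more seriously, $\psdcone{n}{j}\supset\psdcone{n}{j-1}$, so a representation of the smaller cone does not help represent the larger one by ``adjoining a block''---adjoining PSD constraints can only shrink a projected feasible set, not enlarge it. The Schur-complement identity you allude to (expressing $\melem_{n-j}(X)$ via an $O(n)$ block) is never made concrete, and it is not clear how a single polynomial inequality of degree $n-j$ in the entries of $X$ is to be captured by one additional $O(n)\times O(n)$ PSD block. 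Your polar-derivative half inherits the same vagueness, and your size labels for the two constructions are swapped relative to how the step counts would actually work.

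The paper's argument is structurally different and avoids this difficulty. It never passes from $\psdcone{n}{j-1}$ to $\psdcone{n}{j}$ with $n$ fixed. Instead it alternates between the matrix cones $\psdcone{m}{j}$ and the vector cones $\orthant{m}{j}$, \emph{decreasing the ambient dimension $m$} at each step. Two ingredients you do not mention are essential: (i) the Schur--Horn cone, which exploits the $O(n)$-invariance of $\psdcone{m}{j}$ to reduce its representation to that of its diagonal slice $\orthant{m}{j}$ at an additive cost of $O(m^2)$; and (ii) the determinantal identity $\elem_{m-1}(x)=m\det(V_m^T\diag(x)V_m)$, from which one deduces that $\orthant{m}{j}$ is a linear slice of $\psdcone{m-1}{j-1}$ (and, via a Schur-complement variant, a lift of $\psdcone{m-1}{j}$). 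The derivative-based chain $\psdcone{n}{k}\leftarrow\orthant{n}{k}\leftarrow\psdcone{n-1}{k-1}\leftarrow\cdots\leftarrow\psdcone{n-k}{0}$ then has $k$ stages and size $O(kn^2)$, while the polar-derivative chain terminates at the half-space $\psdcone{k+1}{k}$ after $n-k$ stages with size $O((n-k)n^2)$; taking the minimum gives the theorem.
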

We defer detailed proofs of the correctness of our representations to
Sections~\ref{sec:main-pfs} and~\ref{sec:btn}.  At this stage, we just
highlight that there is essentially one basic algebraic fact that underlies all
of our results. Whenever $V_n$ is an $n\times (n-1)$ matrix with orthonormal
columns that are each orthogonal to $\ones_n$, i.e.\ $V_n^TV_n = I_{n-1}$ and
$V_n^T\ones_n = 0$, then
    \[ \elem_{n-1}(x) = n\det(V_n^T\diag(x)V_n).\]
    We give a proof of this identity in Section~\ref{sec:main-pfs}. Note that
    this identity is independent of the particular choice of $V_n$ satisfying
    $V_n^TV_n = I_{n-1}$ and $V_n^T\ones_n = 0$. In fact, all of the results
    expressed in terms of $V_n$ (notably
    Propositions~\ref{prop:RS1},~\ref{prop:RS2},~\ref{prop:RS1-dual},
    and~\ref{prop:RS2-dual}) are similarly independent of the particular choice
    of $V_n$. 

Both of the representations are recursive in nature. The derivative-based
representation is based on recursively applying two basic propositions
(Propositions~\ref{prop:btn} and~\ref{prop:RS1}, to follow) to construct a
chain of semidefinite representations of the form 
\begin{align}
    \framebox{$\psdcone{n}{k}$} \xleftarrow[\textup{Prop.~\ref{prop:btn}}]{O(n^2)} \orthant{n}{k} \xleftarrow[\textup{Prop.~\ref{prop:RS1}}]{0}
    \framebox{$\psdcone{n-1}{k-1}$}\xleftarrow[\textup{Prop.~\ref{prop:btn}}]{O((n-1)^2)}& \orthant{n-1}{k-1} \leftarrow \cdots\label{eq:td} \\
    \cdots&\leftarrow \orthant{n-k+1}{1} \xleftarrow[\textup{Prop.~\ref{prop:RS1}}]{0} \framebox{$\psdcone{n-k}{0}$.}\nonumber
\end{align}
The annotated arrow $C\xleftarrow[\textup{Prop.~$a$}]{m} K$ indicates that given
a semidefinite representation of $K$ of size $m'$ we can construct a
semidefinite representation of $C$ of size $m'+m$, and that an explicit
description of the construction is given in Proposition $a$.

The base case of the recursion is just the positive semidefinite cone
$\psdcone{n-k}{0}$, which has a trivial semidefinite representation. Hence
starting from $\psdcone{n-k}{0}$ (which has a semidefinite representation of
size $n-k$), we can apply Proposition~\ref{prop:RS1} to obtain a semidefinite
representation of $\orthant{n-k+1}{1}$ of size $n-k$, then apply
Proposition~\ref{prop:btn} to obtain a semidefinite representation of
$\psdcone{n-k+1}{1}$ of size $(n-k) + O((n-k+1)^2)$, and so on.

The polar derivative-based representation is based on recursively applying
Proposition~\ref{prop:btn} together with a third basic proposition
(Proposition~\ref{prop:RS2}, to follow) to construct a slightly different chain
of semidefinite representations of the form
\begin{align}
    \framebox{$\psdcone{n}{k}$} \xleftarrow[\textup{Prop.~\ref{prop:btn}}]{O(n^2)} \orthant{n}{k} \xleftarrow[\textup{Prop.~\ref{prop:RS2}}]{n}
    \framebox{$\psdcone{n-1}{k}$} \xleftarrow[\textup{Prop.~\ref{prop:btn}}]{O(n^2)} &\orthant{n-1}{k} \leftarrow \cdots\nonumber\\
    \cdots & \leftarrow \orthant{k+2}{k} \xleftarrow[\textup{Prop.~\ref{prop:RS2}}]{n} \framebox{$\psdcone{k+1}{k}$.}\label{eq:bu}
\end{align}
Note that the base case of the recursion is just $\psdcone{k+1}{k} = \{X\in
    \Sym^{k+1}:\; \tr(X) \geq 0\}$, a half-space.

\subsection{Building blocks of the two recursions}

We now describe the constructions related to each of the types of arrows in the
recursions sketched above. The arrows labeled by Proposition~\ref{prop:btn}
assert that we can construct a semidefinite representation of $\psdcone{n}{k}$
from a semidefinite representation of $\orthant{n}{k}$. This can be done in the
following way. 
\begin{proposition}
    \label{prop:btn}
    If $\orthant{n}{k}$ has a semidefinite representation of size $m$, then
    $\psdcone{n}{k}$ has a semidefinite representation of size $m+O(n^2)$.
    Indeed 
    \begin{equation}
        \label{eq:whysh} \psdcone{n}{k} = \left\{X\in \Sym^n: 
            \exists z\in \R^n\;\;\text{s.t.}\;\;
            z\in \orthant{n}{k},\;\;(X,z) \in \SH_n\right\},
    \end{equation}
    where $\SH_n$ is the \emph{Schur-Horn cone} defined as
    \[ \SH_n = \left\{(X,z):\;\;z_1\geq z_2\geq \cdots \geq z_n,\;\;
            X\in \textup{conv}_{Q\in O(n)} \{Q^T\diag(z)Q\}\right\}\]
    i.e.~the set of pairs $(X,z)$ such that $X$ is in the convex hull
    of all symmetric matrices with ordered spectrum $z$. 
    The Schur-Horn cone has the semidefinite characterization 
    \begin{align}
        (X,z)\in\SH_n\quad\text{if and only if} &\quad z_1\geq z_2 \geq \cdots \geq z_n\;\;\text{and}\nonumber\\
        \text{there exist} & \quad t_2,\ldots,t_{n-1}\in \R,\;\;Z_2,\ldots,Z_{n-1} \psd 0\;\;\nonumber\\
        \text{such that}&\quad \tr(X) = \textstyle{\sum_{j=1}^{n}z_j},\;\;X \nsd z_1 I,\;\;\text{and}\nonumber\\
        \text{for $\ell=2,\ldots,n-1$,} & \quad X  \nsd t_{\ell} I + Z_{\ell}\;\;\text{and}\;\; \ell\cdot t_{\ell} + \tr(Z_{\ell})  
        \leq \textstyle{\sum_{j=1}^{\ell}z_j}.\nonumber
    \end{align}
\end{proposition}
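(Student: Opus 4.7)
The plan is to reduce the proposition to two classical facts: Horn's theorem, which identifies $\textup{conv}\{Q^T\diag(z)Q : Q \in O(n)\}$ with the set of symmetric matrices whose spectrum is majorized by $z$, and the Ky Fan variational principle, which writes a partial sum of eigenvalues as the value of a semidefinite program. Together with a symmetry and convexity argument for the cones $\orthant{n}{k}$, these give both the set identity \eqref{eq:whysh} and the claimed SDP characterization of $\SH_n$.

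For the identity \eqref{eq:whysh}, I would first use Horn's theorem to rewrite $(X,z) \in \SH_n$ as $z_1 \geq \cdots \geq z_n$ together with the majorization $\lambda(X) \prec z$, i.e., $\sum_{j=1}^\ell \lambda_j(X) \leq \sum_{j=1}^\ell z_j$ for $1 \leq \ell \leq n-1$, with equality at $\ell = n$. The $\supseteq$ direction of \eqref{eq:whysh} is then immediate: take $z = \lambda(X)$ in decreasing order; then $(X,z) \in \SH_n$ trivially, and $z \in \orthant{n}{k}$ by the spectral description \eqref{eq:psd-coef-deriv-spec}. For the $\subseteq$ direction I need to check that if $z \in \orthant{n}{k}$ and $\lambda(X) \prec z$ then $\lambda(X) \in \orthant{n}{k}$, which by \eqref{eq:psd-coef-deriv-spec} gives $X \in \psdcone{n}{k}$. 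This follows from the observation that $\orthant{n}{k}$, being cut out by symmetric polynomial inequalities, is both convex and invariant under coordinate permutations; by Hardy-Littlewood-P\'olya, any $\mu \prec z$ is a convex combination of permutations of $z$, and hence lies in $\orthant{n}{k}$.

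For the SDP characterization of $\SH_n$, I would apply the Ky Fan formula $\sum_{j=1}^\ell \lambda_j(X) = \min\{\ell t + \tr(Z) : Z \psd 0,\ X \nsd tI + Z\}$. Specializing to $\ell = 1$, one may take $Z = 0$ and the condition collapses to $X \nsd z_1 I$; specializing to $\ell = 2,\ldots,n-1$ yields the auxiliary variables $t_\ell \in \R$, $Z_\ell \psd 0$ together with the two inequalities $X \nsd t_\ell I + Z_\ell$ and $\ell\cdot t_\ell + \tr(Z_\ell) \leq \sum_{j=1}^\ell z_j$ that appear in the proposition; and the $\ell = n$ majorization requirement (which is an equality) becomes the trace equality $\tr(X) = \sum_{j=1}^n z_j$.

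The main potential obstacle is invoking Horn's theorem precisely enough so that the compact convex hull defining $\SH_n$ matches the majorization cone on pairs $(X,z)$; once this identification is in place the remainder is routine bookkeeping combining Ky Fan's inequality with the permutation invariance of $\orthant{n}{k}$.
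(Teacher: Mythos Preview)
Your proposal is correct and follows essentially the same approach as the paper: the paper also characterizes $\SH_n$ via majorization (its Lemma~\ref{lem:maj}) and then invokes the Nesterov--Nemirovski/Ky Fan representation of $\sum_{i=1}^\ell \lambda_i(X)$ (its Lemma~\ref{lem:sumk}) for the SDP description. The only stylistic difference is in the $\subseteq$ direction of \eqref{eq:whysh}: you argue at the vector level, using permutation invariance and convexity of $\orthant{n}{k}$ together with Hardy--Littlewood--P\'olya, whereas the paper argues directly at the matrix level, using orthogonal invariance and convexity of $\psdcone{n}{k}$ to conclude that the entire orbit convex hull $\textup{conv}_{Q}\{Q^T\diag(z)Q\}$ lies in $\psdcone{n}{k}$ once $\diag(z)$ does---but these are two sides of the same coin.
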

Proposition~\ref{prop:btn} holds because of the \emph{symmetry} of
$\psdcone{n}{k}$. In particular it is a spectral set---invariant under conjugation by orthogonal
matrices. The other reason this representation works is that the diagonal slice of
$\psdcone{n}{k}$ is $\orthant{n}{k}$. We discuss this result in more detail in
Section~\ref{sec:btn}.

The arrows in~\eqref{eq:td} labeled by Proposition~\ref{prop:RS1} appear only
in the derivative-based recursion.  They assert that we can obtain a
semidefinite representation of $\orthant{n}{k}$ from a semidefinite
representation of $\psdcone{n-1}{k-1}$.  Indeed we establish in
Section~\ref{sec:RS1-pfs} that $\orthant{n}{k}$ is actually a slice of
$\psdcone{n-1}{k-1}$.
\begin{proposition}
    \label{prop:RS1}
    If $1\leq k \leq n-1$ then $\orthant{n}{k} = \left\{x\in \R^n:
        V_n^T\diag(x)V_n\in \psdcone{n-1}{k-1}\right\}$.
\end{proposition}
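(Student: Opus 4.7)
My plan is to bootstrap the key identity $\elem_{n-1}(y) = n\det(V_n^T\diag(y)V_n)$ (which is proved separately in Section~\ref{sec:main-pfs}) into a whole family of linear relations between $\elem_j(x)$ and $\melem_j(V_n^T\diag(x)V_n)$, one for each $j$. The trick that makes this work cleanly is that $V_n^T\ones_n = 0$, so translating the argument $x$ by a multiple of $\ones_n$ on the left corresponds to translating $V_n^T\diag(x)V_n$ by a scalar multiple of $I_{n-1}$ on the right. Once all these scalar relations are in hand, the proposition falls out of the descriptions \eqref{eq:coef-deriv} and \eqref{eq:psd-coef-deriv} of the two cones.

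Concretely, I would first substitute $x \mapsto x + s\ones_n$ in the key identity. Using $V_n^TV_n = I_{n-1}$ and $V_n^T\ones_n = 0$, this gives
\[
    \elem_{n-1}(x + s\ones_n) = n\det\bigl(Y + sI_{n-1}\bigr), \qquad Y := V_n^T\diag(x)V_n,
\]
an equality of polynomials in the scalar $s$. Expanding the left side by differentiating $\prod_i(x_i + s) = \sum_{j=0}^n \elem_j(x)\,s^{n-j}$ once in $s$ yields $\sum_{j=0}^{n-1}(n-j)\elem_j(x)\,s^{n-1-j}$, while the right side, read off from the characteristic polynomial of $Y$, equals $n\sum_{j=0}^{n-1}\melem_j(Y)\,s^{n-1-j}$. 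Matching coefficients produces the family
\[
    \melem_j\!\bigl(V_n^T\diag(x)V_n\bigr) = \frac{n-j}{n}\,\elem_j(x) \quad\text{for } j = 0, 1, \ldots, n-1.
\]

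To finish, I would observe that for every index $1 \leq j \leq n-k$ appearing in either cone description, the proportionality constant $(n-j)/n$ is strictly positive (the hypothesis $k \leq n-1$ guarantees $j \leq n-1$). Hence the inequalities $\elem_j(x)\geq 0$ for $j=1,\ldots,n-k$ that define $\orthant{n}{k}$ via \eqref{eq:coef-deriv} are equivalent to $\melem_j(Y) \geq 0$ for the same range of $j$, which by \eqref{eq:psd-coef-deriv} (with $(n-1)-(k-1) = n-k$) is exactly the condition $Y \in \psdcone{n-1}{k-1}$. This yields the claimed set equality.

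I expect the main work to be concentrated in the base identity $\elem_{n-1}(y) = n\det(V_n^T\diag(y)V_n)$ itself; once that is granted the rest reduces to polynomial bookkeeping. The one place where care is needed is the alignment of index ranges: the shift $k \mapsto k-1$ in passing from $\orthant{n}{k}$ to $\psdcone{n-1}{k-1}$ is precisely what is needed so that both cone descriptions involve the indices $j = 1, \ldots, n-k$ and fall within the range $j \leq n-1$ where the proportionality constant is positive.
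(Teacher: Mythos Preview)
Your proposal is correct and follows essentially the same route as the paper: both start from the identity $\elem_{n-1}(x+s\ones_n)=n\det(V_n^T\diag(x)V_n+sI_{n-1})$, expand each side as a polynomial in the translation parameter, and equate coefficients to obtain $(n-j)\elem_j(x)=n\,\melem_j(V_n^T\diag(x)V_n)$, from which the set equality follows immediately via~\eqref{eq:coef-deriv} and~\eqref{eq:psd-coef-deriv}. Your index bookkeeping is cleaner than the paper's, but the argument is the same.
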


The arrows in~\eqref{eq:bu} labeled by Proposition~\ref{prop:RS2} appear only
in the polar derivative-based recursion. They assert that we can obtain a
semidefinite representation of $\orthant{n}{k}$ from a semidefinite
representation of $\psdcone{n-1}{k}$.  We establish the following in
Section~\ref{sec:RS2-pfs}.
\begin{proposition}
    \label{prop:RS2}
    If $1\leq k \leq n-2$ then 
    \[ \orthant{n}{k} = \left\{x\in \R^n: \exists Z\in \psdcone{n-1}{k}\;\;\text{s.t.}\;\;
            \diag(x) \psd V_n Z V_n^T\right\}.\]
\end{proposition}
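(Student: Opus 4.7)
The plan is to prove the stated equality by exhibiting both inclusions, with the witness for the forward direction being the explicit matrix $Z := V_n^T A_x V_n$, where $A_x := \diag(x) - xx^T/\sigma$ and $\sigma := \elem_1(x)$. The central algebraic tool will be the determinantal identity
\[
    \det(tI_n + A_x) \;=\; \frac{1}{\sigma}\sum_{j=1}^{n} j\,\elem_j(x)\,t^{n+1-j},
\]
which I would derive by applying the matrix determinant lemma to $A_x = \diag(x) - (x/\sqrt{\sigma})(x/\sqrt{\sigma})^T$ and simplifying via $\sum_i x_i/(t+x_i) = n - tq'(t)/q(t)$ where $q(t) := \prod_i(t+x_i) = \sum_j \elem_j(x)\,t^{n-j}$. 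Reading off coefficients gives $\melem_\ell(A_x) = (\ell+1)\elem_{\ell+1}(x)/\sigma$ for $\ell = 0,\ldots,n-1$, and $\melem_n(A_x) = 0$ (reflecting $A_x\ones_n = 0$).

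For the forward direction, fix $x \in \orthant{n}{k}$ and first assume $\sigma > 0$. Since $A_x\ones_n = 0$ and $P := V_nV_n^T = I_n - \ones_n\ones_n^T/n$, one has $PA_xP = A_x$, so $V_nZV_n^T = A_x$ and hence $\diag(x) - V_nZV_n^T = xx^T/\sigma \psd 0$. The nonzero eigenvalues of $A_x$ coincide with those of $Z$, so by the identity $\melem_\ell(Z) = (\ell+1)\elem_{\ell+1}(x)/\sigma \geq 0$ for $\ell = 1,\ldots,n-1-k$, which is precisely $Z \in \psdcone{n-1}{k}$. For the corner case $\sigma = 0$, Newton's inequality $(\elem_1(x)/n)^2 \geq \elem_2(x)/\binom{n}{2}$ combined with $\elem_2(x) \geq 0$ (valid since $k \leq n-2$ forces $n-k \geq 2$) yields $\elem_2(x) = 0$, whence $\sum_i x_i^2 = \elem_1(x)^2 - 2\elem_2(x) = 0$, so $x = 0$ and $Z = 0$ works.

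For the reverse direction, given $Z \in \psdcone{n-1}{k}$ with $Y := \diag(x) - V_nZV_n^T \psd 0$, form the orthogonal matrix $U := [\,V_n\;\;\ones_n/\sqrt{n}\,]$ and compute
\[
    U^T Y U \;=\; \begin{pmatrix} V_n^T\diag(x)V_n - Z & V_n^T x/\sqrt{n} \\ x^T V_n/\sqrt{n} & \sigma/n \end{pmatrix} \psd 0.
\]
The $(2,2)$ entry gives $\elem_1(x) = \sigma \geq 0$. If $\sigma > 0$, Schur complement yields $V_n^T A_x V_n \psd Z$; since $\Sym^{n-1}_+ \subseteq \psdcone{n-1}{k}$ and the latter is a convex cone, $V_n^T A_x V_n \in \psdcone{n-1}{k}$, and invoking the identity translates this to $\elem_j(x) \geq 0$ for $j = 2,\ldots,n-k$. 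If $\sigma = 0$, the PSD block structure forces $V_n^T x = 0$, whence $x = 0$.

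The step I expect to be the main obstacle is the derivation of the determinantal identity for $A_x$; once this is in hand, the remainder of the argument reduces to Schur complement reasoning combined with the elementary outer-approximation property $\Sym^{n-1}_+ \subseteq \psdcone{n-1}{k}$ (which, with convexity of the derivative-relaxation cones, lets a PSD perturbation of $Z$ remain in the cone).
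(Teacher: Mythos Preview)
Your proof is correct and follows essentially the same architecture as the paper's: your witness $Z=V_n^TA_xV_n$ coincides with the paper's Schur complement $(M/M_{22})(x)$ (since $M_{12}(x)=V_n^Tx/\sqrt{n}$ and $M_{22}(x)=\sigma/n$ give $(M/M_{22})(x)=V_n^T\diag(x)V_n-V_n^Txx^TV_n/\sigma=V_n^TA_xV_n$), and the two-case Schur-complement argument for both inclusions is identical to the paper's. The one genuine difference is in how the key identity $\melem_\ell(Z)=(\ell+1)\elem_{\ell+1}(x)/\sigma$ is obtained: the paper derives it (as Lemma~\ref{lem:polarids}) by first establishing $\elem_{n-1}(x)=n\det(V_n^T\diag(x)V_n)$ (Lemma~\ref{lem:main-id}) and then passing through the substitution $x\mapsto x^{-1}$ to reach the polar-derivative formula (Lemma~\ref{lem:polar}), whereas you obtain it directly from the matrix determinant lemma applied to $A_x=\diag(x)-xx^T/\sigma$. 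Your route is more self-contained---it does not rely on Lemma~\ref{lem:main-id}---at the cost of not exhibiting the polar-derivative interpretation that motivates the paper's naming scheme. You also make explicit (via Newton's inequality) the step $\orthant{n}{k}\cap\{\elem_1=0\}=\{0\}$ that the paper simply asserts.
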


\subsection{Size of the representations}

Recall that each arrow $C\xleftarrow{m} K$ in~\eqref{eq:td} and~\eqref{eq:bu}
is labeled with the \emph{additional size} $m$ required to implement the
representation of $C$ given a semidefinite representation of $K$. Since the
derivative-based recursion has $2k$ arrows, it is immediate from \eqref{eq:td}
that the derivative-based semidefinite representation of $\psdcone{n}{k}$ has
size $O(kn^2)$ and so is of polynomial size.

On the other hand, this approach gives a disappointingly large semidefinite
representation of the half-space $\psdcone{n}{n-1} = \{X\in \Sym^n: \tr(X) \geq
    0\}$ of size $O(n^3)$.  The derivative-based approach cannot exploit the
fact that this is a very simple cone.  This is why we also consider the polar
derivative-based representation, as it is designed around the fact that
$\psdcone{n}{n-1}$ has a simple semidefinite representation.

It is immediate from \eqref{eq:bu} that the polar derivative-based semidefinite
representation of $\psdcone{n}{k}$ has size $O((n-k)n^2)$ and so is also of
polynomial size. Furthermore, it gives small representations of size $O(n^2)$
exactly when the derivative-based representations are large, of size $O(n^3)$.
For any given pair $(n,k)$ we should always use the derivative-based
representation of $\psdcone{n}{k}$ if $k < n/2$ and the polar derivative-based
representation when $k> n/2$. Theorem~\ref{thm:main} combines our two size
estimates, stating that $\psdcone{n}{k}$ has a semidefinite representation of
size $O(\min\{k,n-k\}n^2)$. 

\subsection{Pseudocode for our derivative-based representation}
We do not write out any of our semidefinite representations in full because the
recursive descriptions given here are actually more naturally suited to
implementation. To illustrate this, we give pseudocode for the MATLAB-based
high-level modeling language YALMIP~\cite{lofberg2004yalmip} that `implements'
the derivative-based representations of $\psdcone{n}{k}$ and $\orthant{n}{k}$.
Decision variables are declared by expressions like \texttt{x = sdpvar(n,1);}
which creates a decision variable \texttt{x} taking values in $\R^n$. An LMI
object is a list of equality constraints and linear matrix inequality
constraints that are linear in any declared decision variables. 

Suppose we have a function \texttt{SH(X,z)} that takes a pair of decision
variables and returns an LMI object corresponding to the constraint that
$(X,z)\in \SH_n$. This is easy to construct from the explicit semidefinite
representation in Proposition~\ref{prop:btn}. Then the function
\texttt{psdcone} takes an $n\times n$ symmetric matrix-valued decision variable
\texttt{X} and returns an LMI object for the constraint $X\in \psdcone{n}{k}$.
\begin{align*}
    \texttt{1:}\quad&\texttt{function K = \underline{psdcone}(X,k)}\\
    \texttt{2:}\quad &\qquad\texttt{if k==0}\\
   \texttt{3:}\quad &\qquad\qquad\texttt{K = [X >= 0];}\\
   \texttt{4:}\quad &\qquad\texttt{else}\\
   \texttt{5:}\quad &\qquad\qquad\texttt{z = sdpvar(size(X,1),1);}\\
   \texttt{6:}\quad & \qquad\qquad\texttt{K = [\underline{orthant}(z,k), \underline{SH}(X,z)];}\\
   \texttt{7:}\quad &\qquad\texttt{end}\\\intertext{It calls a function \texttt{orthant} that takes a 
       decision variable \texttt{x} in $\R^n$ and returns an LMI object for the constraint $x\in \orthant{n}{k}$.}
    \texttt{1:}\quad&\texttt{function K = \underline{orthant}(x,k)}\\
    \texttt{2:}\quad&\qquad\texttt{if k==0}\\
    \texttt{3:}\quad&\qquad\qquad\texttt{K = [x >= 0];}\\
    \texttt{4:}\quad& \qquad\texttt{else}\\
    \texttt{5:}\quad& \qquad\qquad\texttt{V = null(ones(size(x))');}\\
   \texttt{6:}\quad &\qquad\qquad\texttt{K = [\underline{psdcone}(V'*diag(x)*V,k-1)];}\\
   \texttt{7:}\quad & \qquad\texttt{end}
\end{align*}
It is straightforward to adapt these two functions for the polar
derivative-based representation, one needs only to change the base cases (lines
2--4 of each) and to adapt line 6 of \texttt{orthant} to reflect
Proposition~\ref{prop:RS2}.

\subsection{Dual cones}
If a cone is semidefinitely representable, so is its dual cone. In fact there
are explicit procedures to take a semidefinite representation for a cone and
produce a semidefinite representation for its dual cone~\cite[Section
4.1.1]{nemirovski2006advances}.  Here we describe two explicit semidefinite
representations of the dual cones $(\psdcone{n}{k})^*$ that enjoy the same
recursive structure as the corresponding semidefinite representations of
$\psdcone{n}{k}$. 

To construct them, we essentially dualize all the relationships given by the
arrows in~\eqref{eq:td} and~\eqref{eq:bu}.  By straightforward applications of
a conic duality argument, in Section~\ref{sec:duals} we establish the following
dual analogues of Propositions~\ref{prop:RS1} and~\ref{prop:RS2}.
\begin{propositiontwodual}
    \label{prop:RS1-dual}
    If $1\leq k \leq n-1$ then 
    \[ (\orthant{n}{k})^{*} = \left\{\diag(V_n Y V_n^T):\;\; Y\in (\psdcone{n-1}{k-1})^*\right\}.\]
\end{propositiontwodual}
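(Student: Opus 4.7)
The strategy is to apply standard conic duality to Proposition~\ref{prop:RS1}, which exhibits $\orthant{n}{k}$ as the preimage
\[ \orthant{n}{k} = \mathcal{L}^{-1}(\psdcone{n-1}{k-1}) \]
of the closed convex cone $\psdcone{n-1}{k-1}$ under the linear map $\mathcal{L}:\R^n \to \Sym^{n-1}$ given by $\mathcal{L}(x) = V_n^T\diag(x)V_n$.

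First I would identify the adjoint $\mathcal{L}^*$ explicitly. By cyclicity of the trace,
\[ \langle \mathcal{L}(x), Y\rangle = \tr(V_n^T\diag(x)V_n Y) = \tr(\diag(x)\, V_n Y V_n^T) = \langle x,\; \diag(V_n Y V_n^T)\rangle, \]
so $\mathcal{L}^*(Y) = \diag(V_n Y V_n^T)$, which is precisely the expression appearing in the statement of the proposition.

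Next, I would invoke the standard fact that for a closed convex cone $K$ and a linear map $\mathcal{L}$ one has $(\mathcal{L}^{-1}(K))^* = \overline{\mathcal{L}^*(K^*)}$, and that the closure can be dropped provided a Slater-type condition holds, namely $\textup{image}(\mathcal{L}) \cap \textup{int}(K) \neq \emptyset$. In the present setting I would verify Slater by taking $x = \ones_n$, for which $\mathcal{L}(\ones_n) = V_n^T V_n = I_{n-1}$. Since $I_{n-1}$ has all eigenvalues equal to $1$, we compute $\melem_i(I_{n-1}) = \elem_i(1,\ldots,1) = \binom{n-1}{i} > 0$ for every $i \in \{1,\ldots,n-k\}$; all of the defining inequalities of $\psdcone{n-1}{k-1}$ in~\eqref{eq:psd-coef-deriv-spec} are strictly satisfied at $I_{n-1}$, so $I_{n-1}$ lies in the interior of $\psdcone{n-1}{k-1}$.

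With Slater's condition in hand, the duality theorem yields
\[ (\orthant{n}{k})^* = \mathcal{L}^*\bigl((\psdcone{n-1}{k-1})^*\bigr) = \bigl\{\diag(V_n Y V_n^T):\; Y \in (\psdcone{n-1}{k-1})^*\bigr\}, \]
which is exactly the claim. The \emph{main obstacle} is precisely this closure issue: abstract conic duality delivers only $\overline{\mathcal{L}^*(K^*)}$, a set that could in principle strictly contain the image $\mathcal{L}^*(K^*)$, so establishing the interior-point condition is essential to obtain the clean, closure-free description required for the recursive semidefinite representation of $(\orthant{n}{k})^*$.
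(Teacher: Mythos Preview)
Your proposal is correct and follows essentially the same route as the paper: both apply conic duality to the preimage description from Proposition~\ref{prop:RS1}, compute the adjoint $\mathcal{L}^*(Y)=\diag(V_nYV_n^T)$ via trace cyclicity, and verify the Slater condition by noting $\mathcal{L}(\ones_n)=I_{n-1}$ lies in the interior of $\psdcone{n-1}{k-1}$. The paper packages the duality step as Lemma~\ref{lem:dual-general} (a rephrasing of \cite[Corollary 16.3.2]{rockafellar1997convex}) with an auxiliary map $B$ that here is just the identity, so there is no substantive difference.
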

\begin{propositionthreedual}
    \label{prop:RS2-dual}
    If $1\leq k \leq n-2$ then
    \[ (\orthant{n}{k})^* = \left\{\diag(Y):\;\; Y \psd 0,\;\; V_n^TYV_n \in (\psdcone{n-1}{k})^*\right\}.\]
\end{propositionthreedual}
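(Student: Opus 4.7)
The plan is to derive Proposition 3D from Proposition \ref{prop:RS2} by a conic Lagrangian duality argument. By definition $y \in (\orthant{n}{k})^*$ iff the conic program
\[ \min_{x \in \R^n,\; Z \in \psdcone{n-1}{k}} \; \langle y, x\rangle \quad\text{subject to}\quad \diag(x) - V_n Z V_n^T \psd 0 \]
has nonnegative optimal value. Attaching a multiplier $Y \psd 0$ to the semidefinite inequality, the Lagrangian rearranges (via $\langle Y, \diag(x)\rangle = \langle \diag(Y), x\rangle$ and $\langle Y, V_n Z V_n^T\rangle = \langle V_n^T Y V_n, Z\rangle$) to
\[ \langle y - \diag(Y), x\rangle + \langle V_n^T Y V_n, Z\rangle, \]
whose infimum over $x \in \R^n$ and $Z \in \psdcone{n-1}{k}$ equals $0$ precisely when $y = \diag(Y)$ and $V_n^T Y V_n \in (\psdcone{n-1}{k})^*$, and is $-\infty$ otherwise. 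Hence the dual feasible set coincides exactly with the claimed right-hand side.

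Weak duality already yields the inclusion $\supseteq$: for any $y = \diag(Y)$ with $Y \psd 0$ and $V_n^T Y V_n \in (\psdcone{n-1}{k})^*$, and any $x$ arising from some $Z$ as in Proposition \ref{prop:RS2},
\[ \langle y, x\rangle = \langle Y, \diag(x)\rangle = \langle Y, \diag(x) - V_n Z V_n^T\rangle + \langle V_n^T Y V_n, Z\rangle \geq 0. \]

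The main obstacle is the reverse inclusion, which demands strong duality together with dual attainment. I would address this by verifying Slater's condition on the primal: take $Z_0 = I_{n-1}$ (in the interior of $\psdcone{n-1}{k}$, since $I_{n-1}$ is a direction of hyperbolicity for every derivative of $\det$) and $x_0 = c\ones_n$ with $c$ sufficiently large that $c I_n - V_n V_n^T = c I_n - (I_n - \tfrac{1}{n}\ones_n\ones_n^T)$ is strictly positive definite. With strict primal feasibility secured, standard conic LP duality yields that $y \in (\orthant{n}{k})^*$ iff the dual above is feasible, which is precisely the description of $(\orthant{n}{k})^*$ asserted by the proposition.
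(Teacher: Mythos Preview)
Your argument is correct and follows essentially the same route as the paper: dualize the representation of Proposition~\ref{prop:RS2} via conic duality, checking a Slater-type interior condition to secure strong duality and dual attainment. The paper packages this step by invoking Lemma~\ref{lem:dual-general} (a restatement of Rockafellar's Corollary~16.3.2) with $K = \Sym_+^n \times \psdcone{n-1}{k}$, $\Amap(x,Z) = (\diag(x)-V_nZV_n^T,\,Z)$ and $\Bmap(x,Z)=x$, whereas you carry out the Lagrangian computation explicitly; the content is the same. Your choice of strictly feasible point $(c\ones_n,\,I_{n-1})$ with $c>1$ is in fact slightly cleaner than the paper's $(\ones_n,I_{n-1})$, since $\diag(\ones_n)-V_nV_n^T = \tfrac{1}{n}\ones_n\ones_n^T$ is only positive semidefinite, not positive definite, while your $(c-1)I_n + \tfrac{1}{n}\ones_n\ones_n^T$ genuinely lies in the interior of $\Sym_+^n$.
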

We could also obtain a dual version of Proposition~\ref{prop:btn} by directly
applying conic duality to the semidefinite representation in
Proposition~\ref{prop:btn}.  This would involve dualizing the semidefinite
representation of $\SH_n$. Instead we give another, perhaps simpler,
representation of $(\psdcone{n}{k})^*$ in terms of $(\orthant{n}{k})^*$ that is 
not obtained by directly applying conic duality to Proposition~\ref{prop:btn}. 
\begin{propositiononedual}
    \label{prop:btn-dual}
    If $(\orthant{n}{k})^*$ has a semidefinite representation of size $m$, then 
    $(\psdcone{n}{k})^*$ has a semidefinite representation of size $m+O(n^2)$ given by
    \begin{equation}
        \label{eq:dualsh} 
        (\psdcone{n}{k})^* = \left\{W\in \Sym^n: \exists y\in \R^{n}\;\;\text{s.t.}\;\;
                y\in (\orthant{n}{k})^*,\;\; (W,y)\in \SH_n\right\}.
        \end{equation}
\end{propositiononedual}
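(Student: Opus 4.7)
The plan is to prove the set equality in~\eqref{eq:dualsh} by verifying both inclusions directly, and then to observe that the size bound $m+O(n^2)$ is immediate: the right-hand side uses $n$ auxiliary real variables for $y$ (costing nothing towards the PSD size), a representation of $(\orthant{n}{k})^*$ of size $m$, and the explicit representation of $\SH_n$ of size $O(n^2)$ already furnished by Proposition~\ref{prop:btn}.

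For the ``$\supseteq$'' inclusion, I would fix $W$ and $y$ satisfying the right-hand side and check $\langle W,X\rangle\geq 0$ for every $X\in\psdcone{n}{k}$. Writing $W=\sum_i \alpha_i Q_i^T\diag(y)Q_i$ as a convex combination guaranteed by $(W,y)\in\SH_n$, the inner product becomes
\[ \langle W,X\rangle = \sum_i \alpha_i \langle y,\diag(Q_i X Q_i^T)\rangle. \]
Since $\psdcone{n}{k}$ is $O(n)$-invariant, each $Q_i X Q_i^T$ lies in $\psdcone{n}{k}$, so its eigenvalue vector lies in $\orthant{n}{k}$; by Schur's theorem its diagonal is majorized by its eigenvalues, and hence (by Hardy--Littlewood--P\'olya / Birkhoff) is a convex combination of permutations of $\lambda(X)$. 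Since $\orthant{n}{k}$ is convex and permutation-invariant (the defining $\elem_j$ are symmetric), the diagonal lies in $\orthant{n}{k}$, and pairing with $y\in(\orthant{n}{k})^*$ finishes the direction.

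For the ``$\subseteq$'' inclusion, given $W\in(\psdcone{n}{k})^*$, I would take $y=(\lambda_1(W),\lambda_2(W),\ldots,\lambda_n(W))$ sorted decreasingly. The spectral decomposition $W=Q\diag(y)Q^T$ exhibits $(W,y)\in\SH_n$ trivially. To check $y\in(\orthant{n}{k})^*$, pair $y$ with an arbitrary $z\in\orthant{n}{k}$: the matrix $Y:=Q\diag(z)Q^T$ has eigenvalue vector $z\in\orthant{n}{k}$ and so lies in $\psdcone{n}{k}$, and then
\[ \langle y,z\rangle = \tr(\diag(y)\diag(z)) = \tr(Q^T W Q\,\diag(z)) = \langle W,Y\rangle \geq 0. \]

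I expect the main subtlety to be the Schur-majorization step in the ``$\supseteq$'' direction, which bundles together Schur's theorem on diagonals versus eigenvalues, the reformulation of majorization via Birkhoff as a convex combination of permutations, and the convexity and permutation-invariance of $\orthant{n}{k}$. Once this lemma is in place both directions are formal. It is worth emphasizing, as the statement itself notes, that this representation is \emph{not} the direct conic dual of Proposition~\ref{prop:btn}: the same cone $\SH_n$ (rather than $\SH_n^*$) appears on both sides, reflecting a self-dual feature of the spectral-to-diagonal correspondence for orthogonally invariant cones.
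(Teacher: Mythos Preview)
Your proof is correct, but it follows a different route from the paper's. The paper proceeds by first invoking the general structural lemma (Lemma~\ref{lem:sym}) for orthogonally invariant convex sets, reducing everything to showing that the diagonal slice of $(\psdcone{n}{k})^*$ equals $(\orthant{n}{k})^*$ (Lemma~\ref{lem:lewis}). It proves this via Olkin's averaging trick: for any $X$ in an orthogonally invariant convex set $C$, the average $2^{-n}\sum_I \Delta_I X \Delta_I^T$ over the $\pm 1$ diagonal matrices lies in $C$ and equals $\diag(\diag(X))$, so diagonal projection and diagonal slice coincide; conic duality then finishes.

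You instead establish the two inclusions directly. Your ``$\supseteq$'' direction uses Schur's theorem together with Birkhoff / Hardy--Littlewood--P\'olya to argue that the diagonal of any matrix in $\psdcone{n}{k}$ lies in $\orthant{n}{k}$; this is the same conclusion the paper draws from the Olkin average, but via heavier classical machinery. Your ``$\subseteq$'' direction is a clean direct computation using the spectral factorization, bypassing the paper's appeal to conic duality. The trade-off: the paper's argument is more modular (Lemma~\ref{lem:sym} and Lemma~\ref{lem:lewis} apply to any orthogonally invariant cone $C$, not just $\psdcone{n}{k}$) and uses only a finite-group average, while yours is self-contained and makes the role of majorization explicit. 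Either way the size bound $m+O(n^2)$ follows immediately from the representation of $\SH_n$ in Proposition~\ref{prop:btn}, as you note.
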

Recall that Proposition~\ref{prop:btn} holds because $\psdcone{n}{k}$ is
invariant under orthogonal conjugation and $\orthant{n}{k}$ is the diagonal
slice of $\psdcone{n}{k}$. While it is immediate that $(\psdcone{n}{k})^*$ is
also orthogonally invariant, it is a less obvious result that the diagonal
slice of $(\psdcone{n}{k})^*$ is $(\orthant{n}{k})^*$. We prove this in
Section~\ref{sec:btn}.

The recursions underlying the derivative-based and polar derivative-based
representations of $(\psdcone{n}{k})^*$ then take the form
\begin{equation}
    \label{eq:dual-td} (\psdcone{n}{k})^* \leftarrow (\orthant{n}{k})^* 
    \leftarrow (\psdcone{n-1}{k-1})^*\leftarrow  \cdots \leftarrow
    (\orthant{n-k+1}{1})^* \leftarrow (\psdcone{n-k}{0})^*
\end{equation}
and, respectively,
\begin{equation}
    \label{eq:dual-bu}
    (\psdcone{n}{k})^* \leftarrow (\orthant{n}{k})^* \leftarrow
    (\psdcone{n-1}{k})^* \leftarrow \cdots \leftarrow (\orthant{k+2}{k})^* 
    \leftarrow (\psdcone{k+1}{k})^*.
\end{equation}
Note that for the dual derivative-based representation, the base case is
$(\psdcone{n-k}{0})^* = \Sym_+^{n-k}$ (since the positive semidefinite cone is
self dual).  For the dual polar derivative-based representation the base case
is $(\psdcone{k+1}{k})^* = \{tI_{k+1}: t\geq 0\}$, the ray generated by the
identity matrix in $\Sym^{k+1}$.

\subsection{Derivative relaxations of spectrahedral cones}
\label{sec:spec}
So far we have focused on the derivative relaxations of the positive semidefinite 
cone. It turns out that the derivative relaxations of spectrahedral cones 
are just slices of the associated derivative relaxations of the positive semidefinite
cone. 
\begin{proposition}
    Suppose $p(x) = \det(\sum_{i=1}^{n}A_ix_i)$ where the $A_i$ are $m\times m$ symmetric matrices
    and $e\in \R^n$ is such that $\sum_{i=1}^{n}A_ie_i = B$ is positive definite. 
    Then for $k=0,1,\ldots,m-1$,
    \[ \Lambda^{(k)}_{+}(p,e) = \left\{x\in \R^n: \sum_{i=1}^{n}B^{-1/2}A_iB^{-1/2}x_i \in \psdcone{m}{k}\right\}.\]
\end{proposition}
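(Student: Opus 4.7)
The plan is to reduce the statement to the definition of the derivative cones $\psdcone{m}{k}$ via a straightforward change of variables that ``normalizes'' the direction $e$ to the identity matrix.

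First I would set $L(x) := \sum_{i=1}^{n} A_i x_i$, so that $L$ is a linear map from $\R^n$ into $\Sym^m$ with $L(e) = B \succ 0$. Define the linear map
\[ \tilde L(x) := B^{-1/2} L(x) B^{-1/2}, \]
which still lands in $\Sym^m$ and, crucially, satisfies $\tilde L(e) = I_m$. Using multiplicativity of the determinant,
\[ p(x) = \det(L(x)) = \det(B^{1/2}) \det(\tilde L(x)) \det(B^{1/2}) = p(e)\,\det(\tilde L(x)), \]
since $p(e) = \det(B)$. Substituting $x \mapsto x+te$ and using $\tilde L(e) = I_m$ gives
\[ p(x+te) = p(e)\,\det(\tilde L(x) + t I_m). \]

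Next I would read off the Renegar coefficients. Expanding the determinant on the right in powers of $t$ yields
\[ p(x+te) = p(e)\Bigl[t^{m} + \melem_1(\tilde L(x))\,t^{m-1} + \melem_2(\tilde L(x))\,t^{m-2} + \cdots + \melem_m(\tilde L(x))\Bigr], \]
so the coefficient polynomials appearing in \eqref{eq:coefcone} are exactly $a_i(x) = \melem_i(\tilde L(x))$ for $i=1,\ldots,m$. Then by the description of the $k$th derivative relaxation of a hyperbolicity cone given just after \eqref{eq:coefcone} in Section~\ref{sec:deriv-relaxations},
\[ \Lambda_+^{(k)}(p,e) = \{x \in \R^n : a_1(x)\geq 0,\ldots, a_{m-k}(x) \geq 0\} = \{x \in \R^n : \melem_i(\tilde L(x)) \geq 0 \text{ for } i=1,\ldots,m-k\}. \]
Comparing with the description of $\psdcone{m}{k}$ in \eqref{eq:psd-coef-deriv} immediately gives $\Lambda_+^{(k)}(p,e) = \{x : \tilde L(x) \in \psdcone{m}{k}\}$, which is the claim.

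The argument is essentially a congruence/change-of-basis reduction, so there is no genuinely hard step. The only point that requires a little care is confirming that $p$ is genuinely hyperbolic with respect to $e$ (so that the derivative construction and identity \eqref{eq:coefcone} apply): this follows because $\tilde L(x) + tI_m$ is symmetric and hence has real eigenvalues, so $\det(\tilde L(x) + tI_m)$ has only real roots in $t$, and $p(e) = \det(B) > 0$. Everything else is just bookkeeping with the coefficients.
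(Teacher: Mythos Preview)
Your proof is correct and follows essentially the same approach as the paper: both arguments introduce the congruence-normalized map $\tilde L(x)=B^{-1/2}L(x)B^{-1/2}$ (the paper calls it $\Amap$), observe that $p(x+te)=\det(B)\det(\tilde L(x)+tI_m)$, and then read off the derivative relaxations from this identity. Your extra remark verifying hyperbolicity of $p$ with respect to $e$ is a nice addition, but otherwise the two proofs coincide.
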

\begin{proof}
    Let $\Amap(x) = \sum_{i=1}^{n}B^{-1/2}A_iB^{-1/2}x_i$. Then $\Amap(e) = I$ and for all $x\in \R^n$ and
    all $t\in \R$
    \[ p(x+te) = \det(B)\det(\Amap(x+te)) = \det(B)\det(\Amap(x)+tI).\]
    This implies that all the derivatives of $p$ in the direction $e$ are exactly the same
    as the corresponding derivatives of $\det(B)\det(X)$ in the direction $I$ evaluated
    at $X = \Amap(x)$. Since $\det(B) > 0$, it follows that for $k=0,1,\ldots,m-1$,
    $x\in \Lambda_+^{(k)}(p,e)$ if and only if $\Amap(x)\in \psdcone{m}{k}$. 
\end{proof}

We conclude this section with an example of these constructions.
\begin{example}[Derivative relaxations of a $3$-ellipse]
    Given foci $(0,0),(0,4)$ and $(3,0)$ in the plane, the $3$-ellipse
    consisting of points such that the sum of distances to the foci equals $8$
    is shown in Figure~\ref{fig:3ellipse}.  This is one connected component of
    the real algebraic curve of degree $8$ given by $\{(x,y)\in \R^2: \det
        \E(x,y,1)=0\}$ where $\E$ is defined in \eqref{eq:3ellipse} (see Nie et
    al.~\cite{nie2008semidefinite}).  The region enclosed by this $3$-ellipse
    is the $z=1$ slice of the spectrahedral cone defined by $\E(x,y,z)\psd 0$
    where
    \begin{equation}
    \label{eq:3ellipse}
    \E(x,y,z)=\begin{bmatrix} 
           5z+3x & y & y-4z & 0 & y & 0 & 0 & 0\\
            y & 5z+x & 0 & y-4z & 0 & y & 0 & 0\\
            y-4z & 0 & 5z + x & y & 0 & 0 & y & 0\\
            0 & y-4z & y & 5z-x & 0 & 0 & 0 & y\\
            y & 0 & 0 & 0 & 11z+x & y & y-4z & 0\\
            0 & y & 0 & 0 & y & 11z-x & 0 & y-4z\\
            0 & 0 & y & 0 & y-4z & 0 & 11z-x & y\\
        0 & 0 & 0 & y & 0 & y-4z & y & 11z-3x\end{bmatrix}.
    \end{equation} 
    Note that $\E(0,0,1) \pd 0$ and so $e=(0,0,1)$ is a direction of
    hyperbolicity for $p(x,y,z) = \det \E(x,y,z)$.  The left of
    Figure~\ref{fig:3ellipse} shows the $z=1$ slice of the cone
    $\Lambda_+(p,e)$ and its first three derivative relaxations
    $\Lambda_+^{(1)}(p,e),\Lambda_+^{(2)}(p,e)$, and $\Lambda_+^{(3)}(p,e)$.
    The right of Figure~\ref{fig:3ellipse} shows the $z=1$ slice of the cones
    $(\Lambda_+(p,e))^*, (\Lambda^{(1)}_+(p,e))^*, (\Lambda^{(2)}_+(p,e))^*$,
    and $(\Lambda_{+}^{(3)}(p,e))^*$. All of these convex bodies were plotted
    by computing 200 points on their respective boundaries by optimizing 200
    different linear functionals over them.  We performed the optimization by
    modeling our semidefinite representations of these cones in YALMIP
    \cite{lofberg2004yalmip} which numerically solved the corresponding
    semidefinite program using SDPT3 \cite{toh1999sdpt3}.
\end{example}
\begin{figure}
    \begin{center}
\includegraphics[width = 0.4\linewidth]{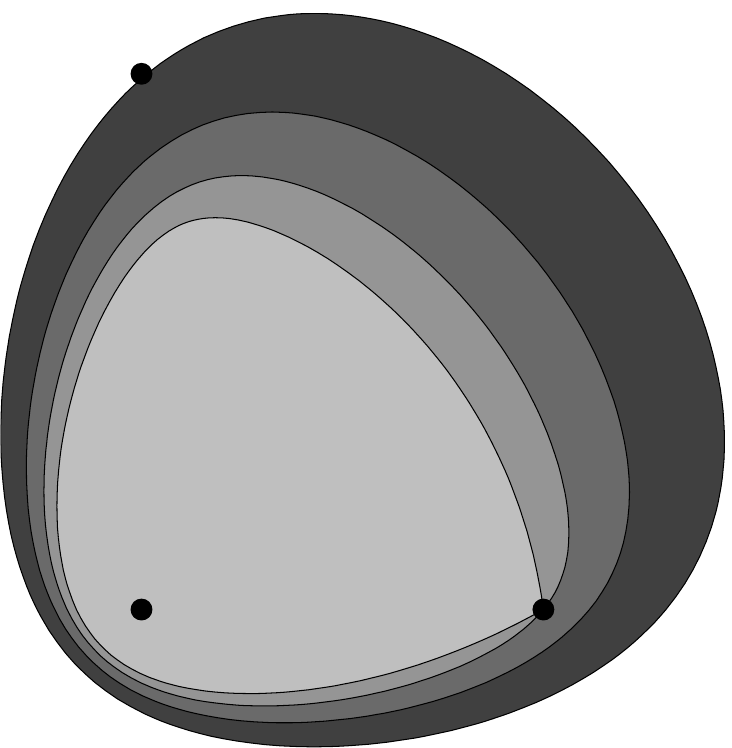}\hspace{1cm}
\raisebox{0.35cm}{\includegraphics[width=0.3\linewidth]{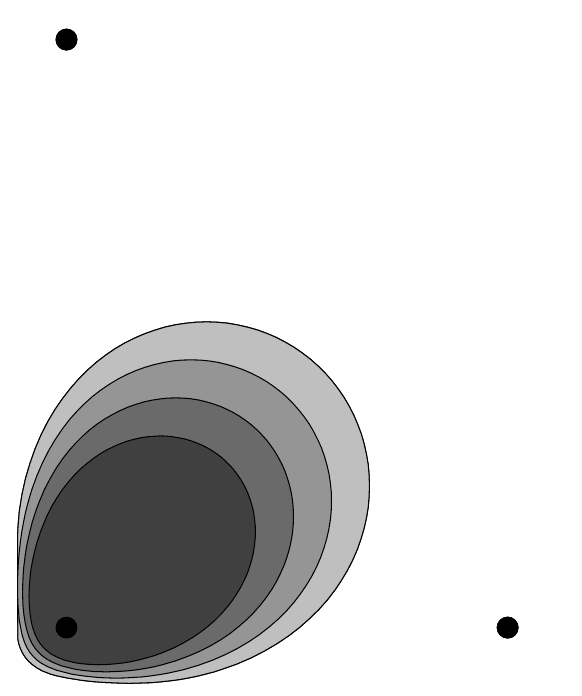}}
    \end{center}
    \caption{\label{fig:3ellipse} On the left, the inner region is the
        $3$-ellipse consisting of points with sum-of-distances to
        $(0,0),(0,4)$, and $(3,0)$ equal to $8$, i.e.~the $z=1$ slice of the
        spectrahedral cone defined by \eqref{eq:3ellipse}.  The outer three
        regions are the $z=1$ slices of the first three derivative relaxations
        of this spectrahedral cone in the direction $(0,0,1)$. On the right are
        the $z=1$ slices of the dual cones of the cones shown on the left, with
        dual pairs having the same shading.}
\end{figure}

\section{The derivative-based and polar derivative-based recursive constructions}
\label{sec:main-pfs}
In this section we prove Proposition~\ref{prop:RS1} which relates
$\orthant{n}{k}$ and $\psdcone{n-1}{k-1}$ as well as Proposition~\ref{prop:RS2}
which relates $\orthant{n}{k}$ and $\psdcone{n-1}{k}$. These relationships are
the geometric consequences of polynomial identities between elementary
symmetric polynomials and determinants.

Specifically the proof of Proposition~\ref{prop:RS1} makes use of a
determinantal representation (Equation~\eqref{eq:cosw} in
Section~\ref{sec:RS1-pfs}) of the derivative
\begin{equation}
    \label{eq:derivexpansion}
    \textstyle{\left.\frac{\partial}{\partial t}\elem_n(sx+t\ones_n)\right|_{s=1}} = 
         \left[1 \cdot \elem_{n-1}(x) + \cdots  +
             (n-1)\cdot \elem_1(x)t^{n-2} + n\cdot t^{n-1}\right].
     \end{equation}
     (Note that $s$ plays no role in~\eqref{eq:derivexpansion}, we include it
     to highlight the relationship with~\eqref{eq:polarexpansion}.) Similarly
     the proof of Proposition~\ref{prop:RS2} relies on a determinantal
     expression (Equation~\eqref{eq:buid} in Section~\ref{sec:RS2-pfs}) for the
     polar derivative
     \begin{equation}
         \textstyle{\left.\frac{\partial}{\partial s}\elem_n(sx+t\ones_n)\right|_{s=1}} = 
          \left[n\cdot \elem_n(x) + (n-1)\cdot \elem_{n-1}(x)t + 
              \cdots + 1\cdot \elem_1(x)t^{n-1}\right].\label{eq:polarexpansion}
      \end{equation}
      This explains why we call one the \emph{derivative-based representation},
      and the other the \emph{polar derivative-based representation}.

\subsection{The derivative-based recursion: relating $\orthant{n}{k}$ and $\psdcone{n-1}{k-1}$}
\label{sec:RS1-pfs}
Let $V_n$ denote an (arbitrary) $n\times (n-1)$ matrix satisfying $V_n^TV_n =
I_{n-1}$ and $V_n^T\ones_n = 0$.  Our results in this section and the next stem
from the following identity. 
 \begin{lemma}
     \label{lem:main-id}
     For all $x\in \R^n$ and all $t\in \R$, 
   \begin{equation}
       \label{eq:cosw} 
       \textstyle{\left.\frac{\partial}{\partial t}\elem_n(sx+t\ones_n)\right|_{s=1}} = 
       \elem_{n-1}(x+t\ones_n) = n\det(V_n^T\diag(x)V_n+tI_{n-1}).
   \end{equation}
    \end{lemma}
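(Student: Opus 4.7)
My plan is to establish the two equalities in the lemma in turn. For the first equality, I would note that $\elem_n(sx + t\ones_n) = \prod_{i=1}^n (sx_i + t)$, so differentiating in $t$ and setting $s = 1$ gives
\[ \textstyle\left.\frac{\partial}{\partial t}\elem_n(sx+t\ones_n)\right|_{s=1} = \sum_{i=1}^n \prod_{j\neq i}(x_j + t) = \elem_{n-1}(x+t\ones_n) \]
directly from the definition of the elementary symmetric polynomial of degree $n-1$.

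For the second equality, I would first strip off the shift by $t\ones_n$. Because $V_n^T V_n = I_{n-1}$, we have $V_n^T \diag(x + t\ones_n) V_n = V_n^T \diag(x) V_n + tI_{n-1}$. Substituting $y := x + t\ones_n$, the claim reduces to the $t=0$ base identity $\elem_{n-1}(y) = n\det(V_n^T \diag(y) V_n)$ for all $y \in \R^n$, which is exactly the algebraic identity stated (without proof) just after Theorem~\ref{thm:main}.

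Proving this base identity is what I expect to be the main obstacle, and I would establish it by combining the Cauchy--Binet formula with an orthogonal completion trick. Viewing $V_n^T\diag(y)V_n$ as the product of the $(n-1)\times n$ matrix $V_n^T \diag(y)$ with the $n\times(n-1)$ matrix $V_n$, Cauchy--Binet gives
\[ \det(V_n^T\diag(y)V_n) = \sum_{i=1}^n \Bigl(\prod_{j\neq i} y_j\Bigr)\det(V_n^{(i)})^2, \]
where $V_n^{(i)}$ denotes the $(n-1)\times(n-1)$ submatrix obtained by deleting row $i$ of $V_n$. It therefore suffices to show $\det(V_n^{(i)})^2 = 1/n$ for every $i$. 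For this I would form the $n\times n$ matrix $U := [V_n \mid \ones_n/\sqrt{n}]$, which is orthogonal by the defining properties of $V_n$, so $\det(U)^2 = 1$. The adjugate identity $\mathrm{adj}(U) = \det(U)\,U^T$, read off at the $(n,i)$ entry, yields $(-1)^{n+i}\det(V_n^{(i)}) = \det(U)\,U_{i,n} = \det(U)/\sqrt{n}$; squaring gives $\det(V_n^{(i)})^2 = 1/n$. Substituting back collapses the Cauchy--Binet sum to $\frac{1}{n}\sum_i \prod_{j\neq i} y_j = \frac{1}{n}\elem_{n-1}(y)$, proving the base identity and completing the lemma. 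An alternative to the adjugate step would be to verify the minor identity for one explicit choice of $V_n$ and then invoke the fact that both sides of the base identity are independent of the particular $V_n$ with orthonormal columns orthogonal to $\ones_n$.
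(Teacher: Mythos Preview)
Your proof is correct, but it takes a different route from the paper's. The paper explicitly remarks that the Cauchy--Binet approach is the one used by Choe et al., and then offers an \emph{alternative} argument: it characterises $\elem_{n-1}$ uniquely as the symmetric, homogeneous polynomial of degree $n-1$ that is degree one in each variable and takes the value $n$ at $\ones_n$, and then verifies that $n\det(V_n^T\diag(x)V_n)$ enjoys each of these properties. The symmetry and multilinearity checks are carried out by exploiting the freedom to replace $V_n$ by any other matrix with orthonormal columns spanning $\ones_n^{\perp}$ (permuting rows for symmetry; choosing a convenient row structure for the degree-one property). Your argument, by contrast, computes directly: Cauchy--Binet reduces the determinant to a weighted sum of squared $(n-1)\times(n-1)$ minors of $V_n$, and your adjugate trick with the orthogonal completion $U=[V_n\mid \ones_n/\sqrt n]$ shows each squared minor equals $1/n$. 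What your approach buys is an explicit and uniform evaluation of the minors, yielding the identity by a short calculation; what the paper's approach buys is that no minor is ever computed---the identity is forced by symmetry and multilinearity alone, which makes the role of the choice-independence of $V_n$ more transparent.
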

   This is a special case of an identity established by Choe et
   al.~\cite[Corollary 8.2]{choe2004homogeneous} and is closely related to
   Sanyal's result~\cite[Theorem 1.1]{sanyal2013derivative}. The proof of Choe
   et al.~uses the Cauchy-Binet identity. Here we provide an alternative proof.
   \begin{proof}
       The polynomial $\elem_{n-1}(x_1,x_2,\ldots,x_n)$ is characterized by
       satisfying $\elem_{n-1}(\ones_n) = n$,  and by being symmetric,
       homogeneous of degree $n-1$ and of degree one in each of the $x_i$. We
       show, below, that $n\det(V_n^T\diag(x)V_n)$ also has these properties
       and so that $\elem_{n-1}(x) = n\det(V_n^T\diag(x)V_n)$. The stated
       result then follows because $V_n^TV_n = I_{n-1}$ implies
        \[ \elem_{n-1}(x+t\ones_n) = n\det(V_n^T\diag(x+t\ones_n)V_n) = 
            n\det(V_n^T\diag(x)V_n + tI_{n-1}).\]
        Now, it is clear that $\det(V_n^T\diag(x)V_n)$ is homogeneous of degree
        $n-1$ and that 
        \[ n\det(V_n^T\diag(\ones_n)V_n) = n\det(I_{n-1}) = n.\] 
        It remains to establish that $\det(V_n^T\diag(x)V_n)$ is symmetric and
        of degree one in each of the $x_i$.  To do so we repeatedly use the
        fact that if $V_n$ and $U_n$ both have orthonormal columns that span
        the orthogonal complement of $\ones_n$ then $\det(V_n^T\diag(x)V_n) =
        \det(U_n^T\diag(x)U_n)$.
        
        The polynomial $\det(V_n^T\diag(x)V_n)$ is symmetric because for any
        $n\times n$ permutation matrix $P$ the columns of $V_n$ and $P V_n$
        respectively are both orthonormal and each spans the orthogonal
        complement of $\ones_n$ (because $P\ones_n = \ones_n$). Hence
        \[ \det(V_n^T\diag(Px)V_n) = \det((PV_n)^T\diag(x)(P V_n)) = \det(V_n^T\diag(x)V_n).\]
        We finally show that $\det(V_n^T\diag(x)V_n)$ is of degree one in each
        $x_i$ by a convenient choice of $V_n$. For any $i$, we can always
        choose $V_n$ to be of the form
        \[ V_n^T = \begin{bmatrix} v_1 & \cdots v_{i-1} & \sqrt{\frac{n-1}{n}}e_i 
                &  v_{i+1} & \cdots&v_n\end{bmatrix}\]
        where $e_i$ is the $i$th standard basis vector in $\R^{n-1}$. Then 
        \[ \det(V_n^T\diag(x)V_n) = \det\left(x_i\left(\textstyle{\frac{n-1}{n}}\right)e_ie_i^T + 
                \textstyle{\sum_{j\neq i}}x_j v_jv_j^T\right)\]
        which is of degree one in $x_i$ by the linearity of the determinant in
        its $i$th column.
    \end{proof}
  As observed by Sanyal, such a determinantal identity for $\elem_{n-1}(x)$
  establishes that $\orthant{n}{1}$ is a slice of $\Sym_+^{n-1} =
  \psdcone{n-1}{1-1}$. We now have two expressions for the derivative
  $\left.\frac{\partial}{\partial t}\elem_n(sx+t\ones_n)\right|_{s=1}$, one
  from the definition~\eqref{eq:derivexpansion} and one from~\eqref{eq:cosw}.  Comparing them allows us
  to deduce Proposition~\ref{prop:RS1}, that $\orthant{n}{k}$ is a slice of
  $\psdcone{n-1}{k-1}$ for all $1\leq k \leq n-1$.
   \begin{proof}[of Proposition~\ref{prop:RS1}]  
       From~\eqref{eq:derivexpansion} and~\eqref{eq:cosw} we see that 
       \begin{align*}
           \textstyle{\left.\frac{\partial}{\partial t}\elem_n(sx+t\ones_n)\right|_{s=1}} & = 
         \left[1 \cdot \elem_{n-1}(x) + \cdots  +
             (n-1)\cdot \elem_1(x)t^{n-2} + n\cdot t^{n-1}\right]\\
         & = n\left[\melem_{n-1}(V_n^T\diag(x)V_n) + \cdots + \melem_1(V_n^T\diag(x)V_n)t^{n-2} + t^{n-1}\right].
       \end{align*}
       Comparing coefficients of powers of $t$ we see that for
       $i=0,1,\ldots,n-1$
       \[ n\melem_{(n-1)-(i-1)}(V_n^T\diag(x)V_n) = (n-i)\elem_{n-i}(x).\]
       Hence for $k=1,2,\ldots,n-1$, $x\in \orthant{n}{k}$ if and only if
       $V_n^T\diag(x)V_n\in \psdcone{n-1}{k-1}$.
   \end{proof} 

    \subsection{The polar derivative-based recursion: relating $\orthant{n}{k}$ and $\psdcone{n-1}{k}$}
    \label{sec:RS2-pfs}
In this section we relate $\orthant{n}{k}$ with $\psdcone{n-1}{k}$, eventually
proving Proposition~\ref{prop:RS2}.  Our argument follows a pattern similar to
the previous section. First we give a determinantal expression for the polar
derivative $\left.\frac{\partial}{\partial s}\elem_{n}(sx+t\ones_n)\right|_{s=1}$, 
and then interpret it geometrically.

While our approach here is closely related to the approach of the previous
section, things are a little more complicated. This is not surprising because
our construction aims to express $\orthant{n}{k}$, which has an algebraic
boundary of degree $n-k$, in terms of $\psdcone{n-1}{k}$, which has an
algebraic boundary of \emph{smaller} degree, $n-k-1$. Hence it is not possible
for $\orthant{n}{k}$ simply to be a slice of $\psdcone{n-1}{k}$.

\paragraph{Block matrix notation:}
Let $\hat{\ones}_n = \ones_{n}/\sqrt{n}$ and define $Q_n = \begin{bmatrix} V_n
    & \hat{\ones}_n\end{bmatrix}$ noting that $Q_n$ is orthogonal. It is
convenient to introduce the block matrix
    \begin{equation}
        \label{eq:Mx}
        M(x) := Q_n^T\diag(x)Q_n = \begin{bmatrix} V_n^T\diag(x)V_n & 
            V_n^T\diag(x)\hat{\ones}_n\\
            \hat{\ones}_n^T\diag(x)V_n & 
            \hat{\ones}_n^T\diag(x)\hat{\ones}_n\end{bmatrix} 
        =:\begin{bmatrix} M_{11}(x) & M_{12}(x)\\
            M_{12}(x)^T & M_{22}(x)\end{bmatrix}
    \end{equation}
which reflects the fact that it is natural to work in coordinates that are
adapted to the symmetry of the problem. (Indeed $\hat{\ones}_n$ and the columns
of $V_n$ each span invariant subspaces for the permutation action on the
coordinates of $\R^n$.)

\paragraph{Schur complements:}
In this section our results are expressed naturally in term of the \emph{Schur complement}
$ (M/M_{22})(x) := M_{11}(x) - M_{12}(x)M_{22}(x)^{-1}M_{12}(x)^T$
which is well defined whenever $\elem_1(x) = nM_{22}(x) \neq 0$.  The following
lemma summarizes the main properties of the Schur complement that we use.
\begin{lemma}
    \label{lem:SCprop}
    If $M = \left[\begin{smallmatrix} M_{11}& M_{12}\\M_{12}^T &
            M_{22}\end{smallmatrix}\right]$ is a partitioned symmetric matrix
    with non-zero scalar $M_{22}$ and $M/M_{22} := M_{11} -
    M_{12}M_{22}^{-1}M_{12}^T$ then
        \begin{equation}
            \begin{bmatrix} M_{11} & M_{12}\\
                M_{12}^T & M_{22}\end{bmatrix}  = 
            \begin{bmatrix} I_{n-1} & M_{12}M_{22}^{-1}\\
                0 & I_{1}\end{bmatrix}
            \begin{bmatrix} M/M_{22} & 0\\
                0 & M_{22}\end{bmatrix}
            \begin{bmatrix} I_{n-1} & 0\\M_{22}^{-1}M_{12}^T & I_1\end{bmatrix}.
            \label{eq:blockfactorization}
        \end{equation}
    This factorization immediately implies the following properties.
    \begin{itemize}
        \item If $M$ is invertible then the $(1,1)$ block of $M^{-1}$ is given
            by $[M^{-1}]_{11} = (M/M_{22})^{-1}$.
        \item If $M_{22}>0$ then 
            \[ M \psd 0 \Longleftrightarrow M/M_{22} \psd 0.\]
    \end{itemize} 
\end{lemma}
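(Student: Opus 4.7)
The plan is to verify the three claims in order, with the block factorization~\eqref{eq:blockfactorization} as the foundation from which the other two follow essentially formally.

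First I would verify~\eqref{eq:blockfactorization} by direct block multiplication. Multiplying the middle block-diagonal factor on the right by the lower block-triangular factor yields
\[
\begin{bmatrix} M/M_{22} & 0 \\ M_{12}^T & M_{22} \end{bmatrix},
\]
and then multiplying on the left by the upper block-triangular factor produces the $(1,1)$ entry $(M/M_{22}) + M_{12}M_{22}^{-1}M_{12}^T = M_{11}$, the $(1,2)$ entry $M_{12}$, and the remaining entries $M_{12}^T$ and $M_{22}$. This is routine bookkeeping and the only risk is an arithmetic slip.

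Second, for the inverse claim, I would simply invert~\eqref{eq:blockfactorization}. Each of the outer block-triangular factors has unit determinant and hence is invertible, with inverse obtained by negating the off-diagonal block. The middle factor is block-diagonal and invertible (since $M$ being invertible forces $M/M_{22}$ invertible, using that $\det M = M_{22}\cdot \det(M/M_{22})$, which itself follows from~\eqref{eq:blockfactorization}). Inverting and reading off the top-left block then yields $[M^{-1}]_{11} = (M/M_{22})^{-1}$ directly, since the off-diagonal contributions of the triangular factors only affect the other blocks of the inverse.

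Third, for the positive semidefiniteness characterization, I would note that~\eqref{eq:blockfactorization} is a congruence $M = L^T D L$ where $L$ is the unit block-triangular factor on the right and $D = \operatorname{diag}(M/M_{22},M_{22})$. Since $L$ is invertible over $\R$, the symmetric matrices $M$ and $D$ have the same inertia (Sylvester's law), so $M \psd 0$ if and only if $D \psd 0$. The block-diagonal matrix $D$ is positive semidefinite if and only if both diagonal blocks are, and under the hypothesis $M_{22} > 0$ this reduces to $M/M_{22} \psd 0$, as claimed.

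There is no real obstacle here: the content of the lemma is the factorization~\eqref{eq:blockfactorization}, which is a standard identity verified by straightforward block multiplication, and the two bullet points are immediate formal consequences. The only care needed is to keep track of transposes and to invoke invertibility of the unit triangular factors when passing between statements about $M$ and statements about its Schur complement.
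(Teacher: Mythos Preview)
Your proposal is correct and matches the paper's treatment: the paper states the factorization~\eqref{eq:blockfactorization} without proof and asserts that the two bullet points follow ``immediately'' from it, which is precisely the structure you give. Your verification by block multiplication, inversion of the triangular factors, and the congruence/Sylvester argument for the semidefiniteness equivalence are exactly the standard checks the paper is taking for granted.
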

We now establish our determinantal expression for the polar derivative. 
\begin{lemma}
    \label{lem:polar}
    If $\elem_1(x)  = nM_{22}(x) \neq 0$ then
    \begin{equation}
        \label{eq:buid}
        \textstyle{\left.\frac{\partial}{\partial s} \elem_n(sx+t\ones_n)\right|_{s=1}} 
        = \elem_1(x)\det((M/M_{22})(x) + tI_{n-1}).
    \end{equation}
\end{lemma}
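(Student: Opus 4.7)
The plan is to prove that both sides of \eqref{eq:buid} equal $n\phi(t,0)$, where
\[
\phi(t,\tau) := \det\begin{bmatrix} M_{11}(x) + tI_{n-1} & M_{12}(x) \\ M_{12}(x)^T & M_{22}(x) + \tau\end{bmatrix}.
\]
Notice that $\phi$ is \emph{affine} in $\tau$ (since $\tau$ enters only the scalar $(n,n)$ block), with cofactor derivative $\partial_\tau \phi(t,\tau) = \det(M_{11}(x)+tI_{n-1})$. For the right-hand side of \eqref{eq:buid}, applying the Schur complement factorization of Lemma~\ref{lem:SCprop} to $\phi(t,0)$---legitimate because the hypothesis gives $M_{22}(x) = \elem_1(x)/n \neq 0$---yields $\phi(t,0) = M_{22}(x)\det((M/M_{22})(x)+tI_{n-1})$, and hence $n\phi(t,0) = \elem_1(x)\det((M/M_{22})(x)+tI_{n-1})$.

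For the left-hand side, I would use that $M(x)+tI_n = Q_n^T(\diag(x)+tI_n)Q_n$, so the block form of $M(x)+tI_n$ matches the matrix inside $\phi(t,t)$. Setting $h(t) := \elem_n(x+t\ones_n) = \det(\diag(x)+tI_n)$, the affinity of $\phi$ in $\tau$ yields
\[
h(t) = \phi(t,t) = \phi(t,0) + t\det(M_{11}(x)+tI_{n-1}).
\]
Because $\elem_n$ is homogeneous of degree $n$, the rescaling identity $\elem_n(sx+t\ones_n) = s^n h(t/s)$ combined with differentiation at $s=1$ produces the Euler-type relation
\[
\left.\frac{\partial}{\partial s} \elem_n(sx+t\ones_n)\right|_{s=1} = nh(t) - th'(t).
\]

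The crucial step---and the only place real insight is needed---is identifying $h'(t)$ via Lemma~\ref{lem:main-id}. From $h(t) = \prod_i(x_i+t)$ one directly obtains $h'(t) = \elem_{n-1}(x+t\ones_n)$, and applying Lemma~\ref{lem:main-id} with $x+t\ones_n$ in place of $x$---which merely shifts $V_n^T\diag(x)V_n$ by $tI_{n-1}$---gives $h'(t) = n\det(M_{11}(x)+tI_{n-1})$. Substituting this along with the expression for $h(t)$ into $nh(t) - th'(t)$ cancels the $t\det(M_{11}(x)+tI_{n-1})$ contributions and leaves exactly $n\phi(t,0)$, matching the right-hand side computed above. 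The rest is routine bookkeeping with Schur complements, affinity of $\phi$ in $\tau$, and Euler's homogeneity relation.
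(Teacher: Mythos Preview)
Your proof is correct and takes a genuinely different route from the paper's. The paper first assumes all $x_i\neq 0$, rewrites $\elem_n(sx+t\ones_n)=\elem_n(x)\elem_n(s\ones_n+tx^{-1})$, differentiates in $s$, applies Lemma~\ref{lem:main-id} to obtain $\elem_1(x)\det(tI_{n-1}+s(V_n^T\diag(x^{-1})V_n)^{-1})$, and then recognizes $(V_n^T\diag(x^{-1})V_n)^{-1}$ as the Schur complement $(M/M_{22})(x)$ via the inverse-block formula in Lemma~\ref{lem:SCprop}; the restriction $x_i\neq 0$ is removed at the end by continuity. Your argument instead introduces the auxiliary determinant $\phi(t,\tau)$, uses its affinity in $\tau$ together with Euler's relation $nh(t)-th'(t)$ for the polar derivative, and identifies $h'(t)$ through Lemma~\ref{lem:main-id}. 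Both approaches ultimately rest on Lemma~\ref{lem:main-id}; the paper's inverse trick is conceptually appealing in that it displays the polar derivative as the ordinary derivative after the involution $x\mapsto x^{-1}$, while your approach is more self-contained and never needs the auxiliary assumption $x_i\neq 0$ or a limiting argument, working directly under the stated hypothesis $M_{22}(x)\neq 0$.
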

\begin{proof}
    First assume $x_i \neq 0$ for $i=1,2,\ldots,n$. If $x\in \R^n$ let $x^{-1}$
    denote its entry-wise inverse.  Exploiting our determinantal expression for
    the derivative we see that 
    \begin{align}
        \textstyle{\frac{\partial}{\partial s}\elem_n(sx+t\ones_n)} & =
        \textstyle{\elem_n(x)\frac{\partial}{\partial s}\elem_{n}(s\ones_n+tx^{-1})}\nonumber\\
        & \stackrel{*}{=} \elem_n(x)\,n\,\det(V_n^T\diag(tx^{-1}+s\ones_n)V_n)\nonumber\\
        & = \elem_n(x)\,n\,\det(V_n^{T}\diag(x^{-1})V_n)\det(tI_{n-1} + s(V_n^T\diag(x^{-1})V_n)^{-1})\nonumber\\
        & \stackrel{*}{=} \elem_n(x)\elem_{n-1}(x^{-1})\det(tI_{n-1} + s(V_n^{T}\diag(x^{-1})V_n)^{-1})\nonumber\\
        & = \elem_1(x)\det(tI_{n-1} + s(V_n^T\diag(x^{-1})V_n)^{-1})\label{eq:idsc}
    \end{align}
    where the equalities marked with an asterisk are due to~\eqref{eq:cosw}.
    Since $Q_n$ is orthogonal
    $M(x)^{-1} = (Q_n^T\diag(x)Q_n)^{-1} = Q_n^T\diag(x^{-1})Q_n  = M(x^{-1})$. 
    Hence using a property of the Schur complement from Lemma~\ref{lem:SCprop}
    we see that  
    \[ (V_n^T\diag(x^{-1})V_n)^{-1} = [M(x^{-1})]_{11}^{-1} = 
        [M(x)^{-1}]_{11}^{-1} = (M/M_{22})(x).\]
    Substituting this into~\eqref{eq:idsc} establishes the stated identity,
    which, by continuity, is valid for all $x$ such that $\elem_1(x) = nM_{22}(x) \neq
    0$.
    \end{proof}
    We now have two expressions for the polar derivative,
    namely~\eqref{eq:polarexpansion} and~\eqref{eq:buid}. One comes from the
    definition of polar derivative, the other from the determinantal
    representation of Lemma~\ref{lem:polar}. Expanding each and equating
    coefficients gives the following identities.
    \begin{lemma}
        \label{lem:polarids}
        Let $x\in \R^n$ be such that $\elem_1(x) = n M_{22}(x) \neq 0$. Then
        for $k=0,1,2,\ldots,n-1$
        \[ \elem_1(x)\melem_{n-1-k}((M/M_{22})(x)) = (n-k)\elem_{n-k}(x).\]
    \end{lemma}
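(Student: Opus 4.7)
The plan is to compare, as polynomials in $t$, the two explicit expressions for the polar derivative $\left.\frac{\partial}{\partial s}\elem_n(sx+t\ones_n)\right|_{s=1}$ that we have already derived, and then match coefficients of like powers of $t$. Under the hypothesis $\elem_1(x) = n M_{22}(x) \neq 0$, both expressions are valid simultaneously, so such a coefficient comparison is legitimate.

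First I would recall that equation \eqref{eq:polarexpansion} already gives the expansion
\[
\textstyle{\left.\frac{\partial}{\partial s}\elem_n(sx+t\ones_n)\right|_{s=1}} = \sum_{k=0}^{n-1}(n-k)\elem_{n-k}(x)\,t^{k},
\]
directly from the definition of the polar derivative. On the other hand, Lemma~\ref{lem:polar} provides the determinantal expression
\[
\textstyle{\left.\frac{\partial}{\partial s}\elem_n(sx+t\ones_n)\right|_{s=1}} = \elem_1(x)\det\bigl((M/M_{22})(x) + tI_{n-1}\bigr),
\]
which is (up to the scalar $\elem_1(x)$) the characteristic polynomial of the $(n-1)\times(n-1)$ Schur complement $(M/M_{22})(x)$.

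Next I would expand this characteristic polynomial using the definition of the coefficients $\melem_j$: for any symmetric $Y\in\Sym^{n-1}$ we have $\det(Y+tI_{n-1}) = \sum_{j=0}^{n-1}\melem_{j}(Y)\,t^{(n-1)-j}$, which reindexed by $k = (n-1)-j$ becomes $\sum_{k=0}^{n-1}\melem_{n-1-k}(Y)\,t^{k}$. Applying this with $Y = (M/M_{22})(x)$ and multiplying through by $\elem_1(x)$ gives
\[
\elem_1(x)\det\bigl((M/M_{22})(x) + tI_{n-1}\bigr) = \sum_{k=0}^{n-1}\elem_1(x)\,\melem_{n-1-k}\bigl((M/M_{22})(x)\bigr)\,t^{k}.
\]

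Equating the coefficient of $t^{k}$ in the two expansions above yields exactly
\[
\elem_1(x)\,\melem_{n-1-k}\bigl((M/M_{22})(x)\bigr) = (n-k)\,\elem_{n-k}(x),
\]
for each $k=0,1,\ldots,n-1$, which is the claimed identity. There is no real obstacle here; the substantive work has already been carried out in establishing the determinantal identity of Lemma~\ref{lem:polar}, and the only point to be careful about is keeping the indexing convention for $\melem_j$ (the coefficient of $t^{(n-1)-j}$ in the characteristic polynomial of an $(n-1)\times(n-1)$ matrix) consistent with the reindexing $k = (n-1)-j$.
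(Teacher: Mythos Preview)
Your proof is correct and follows essentially the same approach as the paper: expand the polar derivative both via~\eqref{eq:polarexpansion} and via the determinantal identity of Lemma~\ref{lem:polar}, then equate coefficients of $t^k$. The paper's argument is identical, just presented more tersely.
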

    \begin{proof}
        Expanding the polar derivative two ways (from Lemma~\ref{lem:polar}
        and~\eqref{eq:polarexpansion}) we obtain
        \begin{align*}
         \textstyle{\left.\frac{\partial}{\partial s}\elem_n(sx+t\ones_n)\right|_{s=1}} & = 
          \left[n\cdot \elem_n(x) + (n-1)\cdot \elem_{n-1}(x)t + 
              \cdots + 1\cdot \elem_1(x)t^{n-1}\right]\\
          & = \elem_1(x)\left[\melem_{n-1}((M/M_{22})(x)) + \melem_{n-2}((M/M_{22})(x))t + \cdots + t^{n-1}\right].
      \end{align*}
    The result follows by equating coefficients of $t^k$.
    \end{proof}
    We are now in a position to prove the main result of this section.
    \begin{proof}[of Proposition~\ref{prop:RS2}]
        From the definition of $M(x)$ in~\eqref{eq:Mx}, observe that because
        $Q_n$ is orthogonal, the constraint $\diag(x) \psd V_n Z V_n^T$ holds
        if and only if 
        \[M(x) = Q_n^T\diag(x)Q_n \psd Q_n^T(V_n Z V_n^T)Q_n = \left[\begin{smallmatrix} Z & 0\\0 & 0\end{smallmatrix}\right].\] 
        Hence we aim to establish the following statement that is equivalent to
        Proposition~\ref{prop:RS2}
        \[ \orthant{n}{k} = \left\{x\in \R^n:\,\exists Z\in \psdcone{n-1}{k}\;\;\text{s.t.}\;\;
                M(x) \psd \begin{bmatrix} Z & 0\\0 & 0\end{bmatrix}\right\}\quad\text{for $k=1,2,\ldots,n-2$}.\]
        The arguments that follow repeatedly use the fact (from
        Lemma~\ref{lem:SCprop}) that if $\elem_1(x) = n M_{22}(x) > 0$ then
        \begin{equation}
            \label{eq:SCspec}
            M(x) \psd \begin{bmatrix} Z & 0\\0 & 0\end{bmatrix}\quad \Longleftrightarrow\quad (M/M_{22})(x) \psd Z.
        \end{equation}

        With these preliminaries established, we turn to the proof of
        Proposition~\ref{prop:RS2}. First suppose there is $Z\in
        \psdcone{n-1}{k}$ such that $M(x) - \left[\begin{smallmatrix} Z & 0\\0
                & 0\end{smallmatrix}\right]\psd 0$.  There are two cases to
        consider, depending on whether $M_{22}(x)$ is positive or zero. 
        
        Suppose we are in the case where $\elem_1(x) = nM_{22}(x) > 0$. Then
        $(M/M_{22})(x) \psd Z$, so there is some $Z' \in \Sym^{n-1}_+$ such
        that 
        \[ (M/M_{22})(x) = Z+Z' \in \psdcone{n-1}{k} + \Sym^{n-1}_+ = \psdcone{n-1}{k}\]
        where the last equality holds because $\psdcone{n-1}{k} \supset
        \Sym^{n-1}_+$. It follows that $x\in \orthant{n}{k}$ because
        $\elem_1(x) > 0$ (by assumption) and by Lemma~\ref{lem:polarids}, 
        \[i\elem_{i}(x) = \elem_1(x)E_{i-1}((M/M_{22})(x)) \geq 0\quad\text{for $i=2,3,\ldots,n-k$}.\]
        Now consider the case where $\elem_1(x) = nM_{22}(x) = 0$. Since 
        \[ \begin{bmatrix} M_{11}(x) -Z & M_{12}(x)\\M_{12}(x)^{T} & M_{22}(x)\end{bmatrix} = \begin{bmatrix}
                M_{11}(x) - Z & V_n^T x/\sqrt{n}\\
                x^TV_n/\sqrt{n} & 0\end{bmatrix} \psd 0\]
        it follows that $V_n^Tx = 0$. Since, $\hat{\ones}_n^T x =0$ we see that
        $Q_n^Tx = 0$ so $x=0\in \orthant{n}{k}$. 
        
        Consider the reverse inclusion and suppose $x\in \orthant{n}{k}$. Again
        there are two cases depending on whether $\elem_1(x)$ is positive or
        zero. If $\elem_1(x) > 0$ take $Z = (M/M_{22})(x)$.  Then,
        by~\eqref{eq:SCspec}, $M(x) \psd \left[\begin{smallmatrix} Z & 0\\0 &
                0\end{smallmatrix}\right]$. To see that $Z\in \psdcone{n-1}{k}$
        note that by Lemma~\ref{lem:polarids},  
       \[E_{i}((M/M_{22})(x)) =  (i+1)\frac{\elem_{i+1}(x)}{\elem_{1}(x)} \geq 0\quad\text{for $i=1,2,\ldots,n-1-k$}.\]
       
        If $x\in \orthant{n}{k}$ and $\elem_1(x)=0$ then we use the assumption
        that $k \leq n-2$.  Under this assumption $x\in \orthant{n}{k}\cap\{x:
            \elem_1(x) = 0\} = \{0\}$. In this case we can simply take $Z=0\in
        \psdcone{n-1}{k}$ since $M(x) = 0 \psd 0 = \left[\begin{smallmatrix} Z
                & 0\\0 & 0\end{smallmatrix}\right]$.
    \end{proof}
   
    \subsection{Dual relationships}
    \label{sec:duals}
    We conclude this section by establishing Propositions~\ref{prop:RS1-dual}
    and~\ref{prop:RS2-dual}, the dual versions of Propositions~\ref{prop:RS1}
    and~\ref{prop:RS2}. Both follow from general results about conic duality,
    such as the following rephrasing of \cite[Corollary
    16.3.2]{rockafellar1997convex}.
    \begin{lemma}
        \label{lem:dual-general}
        Suppose $K\subset \R^m$ is a closed convex cone and
        $\Amap:\R^p\rightarrow \R^m$ and $\Bmap:\R^{p}\rightarrow \R^n$ are
        linear maps. Let
        \[ C = \{\Bmap(x): \Amap(x)\in K\}\subset \R^{n}.\]
        Furthermore, assume that there is $x_0\in \R^p$ such that $\Amap(x_0)$
        is in the relative interior of $K$. Then 
        \[ C^* = \{w\in \R^{n}: \exists y\in K^*\;\;\text{s.t.}\;\;
                \Bmap^*(w) = \Amap^*(y)\}.\]
    \end{lemma}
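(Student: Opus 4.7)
The plan is to split the proof into two linear-algebraic manipulations bracketed by a single application of Rockafellar's conic-duality corollary. First I would rewrite $C$ as $\Bmap(S)$ where $S := \Amap^{-1}(K) = \{x\in\R^p: \Amap(x)\in K\}$, and then use the defining property of the dual cone together with the adjoint identity $\langle w,\Bmap(x)\rangle = \langle \Bmap^*(w),x\rangle$ to conclude that $w\in C^*$ if and only if $\Bmap^*(w)\in S^*$. This step is purely definitional.

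The heart of the argument is the identification $S^* = \Amap^*(K^*)$. The inclusion $\Amap^*(K^*)\subseteq S^*$ is immediate from $\langle \Amap^*(y),x\rangle = \langle y,\Amap(x)\rangle \geq 0$ whenever $y\in K^*$ and $x\in S$. For the reverse inclusion, I would appeal to \cite[Corollary 16.3.2]{rockafellar1997convex}, which asserts that $S^* = \mathrm{cl}(\Amap^*(K^*))$ in general, and that the closure is unnecessary under the hypothesis that there exists $x_0$ with $\Amap(x_0)$ in the relative interior of $K$. This is precisely the Slater-type assumption in the statement.

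The main obstacle is the closedness claim, which is where the Slater hypothesis gets consumed; everything else is bookkeeping with adjoints. It is worth noting that without the assumption on $x_0$, the image $\Amap^*(K^*)$ could fail to be closed (even for polyhedral $K$ under a general linear map, a degenerate section can produce non-closed conic images), in which case we would only obtain $C^*$ as the closure of the set described on the right-hand side of the lemma, rather than the set itself.

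Combining the two steps, $w\in C^*$ iff $\Bmap^*(w)\in S^*$ iff $\Bmap^*(w)\in \Amap^*(K^*)$, which is exactly the existential description $\Bmap^*(w) = \Amap^*(y)$ for some $y\in K^*$.
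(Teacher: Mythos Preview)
The paper does not actually supply a proof of this lemma; it is simply stated as a rephrasing of \cite[Corollary 16.3.2]{rockafellar1997convex}. Your argument is correct and makes explicit precisely how that rephrasing works: the adjoint identity $\langle w,\Bmap(x)\rangle = \langle \Bmap^*(w),x\rangle$ reduces membership in $C^*$ to membership of $\Bmap^*(w)$ in $(\Amap^{-1}K)^*$, and Rockafellar's corollary then identifies the latter with $\Amap^*(K^*)$ under the relative-interior hypothesis. So your approach is exactly the intended one, just with the details spelled out.

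One minor inaccuracy in an aside: your parenthetical claim that even for polyhedral $K$ the image $\Amap^*(K^*)$ can fail to be closed is not right---linear images of polyhedral cones are polyhedral and hence closed. The closure issue without the Slater-type hypothesis arises only for non-polyhedral $K$. This does not affect your main argument.
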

    \begin{proof}[of Proposition~\ref{prop:RS1-dual}]
    Define $\Amap: \R^{n}\rightarrow \Sym^{n-1}$ by $\Amap(x) =
    V_n^T\diag(x)V_n$ and define $\Bmap$ to be the identity on $\R^n$. Then by
    Proposition~\ref{prop:RS1}
    \[ \orthant{n}{k} = \{\Bmap(x): \Amap(x)\in \psdcone{n-1}{k-1}\}.\] 
    Clearly $\Bmap^*$ is the identity on $\R^n$ and
    $\Amap^*:\Sym^{n-1}\rightarrow \R^n$ is given by $\Amap^*(Y) = \diag(V_n Y
    V_n^T)$. Since $\Amap(\ones_n) = I_{n-1}$ is in the interior of
    $\psdcone{n-1}{k-1}$, applying Lemma~\ref{lem:dual-general} we obtain
    \[ (\orthant{n}{k})^* = \{w\in \R^{n}: \exists Y\in (\psdcone{n-1}{k-1})^*
            \;\;\text{s.t.}\;\; w = \diag(V_n Y V_n^T).\}\]
    Eliminating $w$ gives the statement in Proposition~\ref{prop:RS1-dual}.
\end{proof}
\begin{proof}[of Proposition~\ref{prop:RS2-dual}]
    Define $\Amap: \R^{n}\times \Sym^{n-1} \rightarrow \Sym^{n}\times
    \Sym^{n-1}$ by 
    \[ \Amap(x,Z) = (\diag(x) - V_n Z V_n^T,Z)\]
    and $\Bmap:\R^{n}\times \Sym^{n-1}\rightarrow \R^n$ by $\Bmap(x,Z) = x$.
    Then by Proposition~\ref{prop:RS2} 
    \[ \orthant{n}{k} = \{\Bmap(x,Z): \Amap(x,Z) \in \Sym_+^n \times \psdcone{n-1}{k}\}.\] 
    A straightforward computation shows that $\Bmap^*:\R^n\rightarrow
    \R^n\times \Sym^{n-1}$ is given by $\Bmap^*(w) = (w,0)$. Furthermore
    $\Amap^*:\Sym^{n}\times \Sym^{n-1}$ is given by $\Amap^*(Y,W) = (\diag(Y),
    W-V_n^TYV_n)$.  Since $\Amap(2\ones_n,I_{n-1})$ is in the interior of
    $\Sym_+^n\times \psdcone{n-1}{k}$, applying Lemma~\ref{lem:dual-general} we
    obtain
    \[ (\orthant{n}{k})^* = \{w\in \R^n: \exists (Y,W)\in \Sym_+^n\times (\psdcone{n-1}{k})^*\;\;\text{s.t.}\;\;
            w = \diag(W),\;\; V_n^T Y V_n = W\}.\]
    Eliminating $W$ and $w$ gives the statement in
    Proposition~\ref{prop:RS2-dual}.
\end{proof}    

    \section{Exploiting symmetry: relating $\psdcone{n}{k}$ and $\orthant{n}{k}$ and their dual cones}
    \label{sec:btn}
    In the introduction we observed that $\psdcone{n}{k}$ is invariant under the
    action of orthogonal matrices by conjugation on $\Sym^n$ and that its
    diagonal slice is $\orthant{n}{k}$. In this section we explain how to use
    these properties to construct the semidefinite representation of
    $\psdcone{n}{k}$ in terms of $\orthant{n}{k}$ stated in
    Proposition~\ref{prop:btn}. We then discuss how the duals of these two
    cones relate. The material in this section is well known so in some places
    we give appropriate references to the literature rather than providing
    proofs.

    Let $O(n)$ denote the group of $n\times n$ orthogonal matrices. The
    \emph{Schur-Horn cone} is 
    \begin{equation}
        \label{eq:SH}
        \SH_n = \left\{(X,z):\;\;z_1\geq z_2\geq \cdots \geq z_n,\;\;X\in
            \textup{conv}_{Q\in O(n)} \{Q^T\diag(z)Q\}\right\},
    \end{equation}
    the set of pairs $(X,z)$ such that $z$ is in weakly decreasing order and
    $X$ is in the convex hull of symmetric matrices with ordered spectrum $z$.
    We call this the Schur-Horn cone because all \emph{symmetric Schur-Horn
        orbitopes} \cite{sanyal2011orbitopes} appear as slices of $\SH_n$ of
    the form $\{X: (X,z_0)\in \SH_n\}$ where $z_0$ is fixed and in weakly
    decreasing order.

    Whenever a convex subset $C\subset \Sym^n$ is invariant under orthogonal
    conjugation, i.e.~$C$ is a spectral set, we can express $C$ in terms of the
    Schur-Horn cone and the (hopefully simpler) diagonal slice of $C$ as
    follows.
    \begin{lemma}
        \label{lem:sym}
        If $C\subset\Sym^n$ is convex and invariant under orthogonal
        conjugation then 
        \[ C = \{X\in \Sym^n: \exists z\in \R^n\;\;\text{s.t.}\;\; (X,z)\in
                \SH_n,\;\;\diag(z)\in C\}.\]
    \end{lemma}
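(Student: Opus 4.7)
The plan is to prove the set equality by checking the two inclusions separately, each of which follows quickly from the spectral theorem and the two defining hypotheses on $C$ (convexity and invariance under orthogonal conjugation).

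For the inclusion $\supseteq$, I would fix $X \in \Sym^n$ together with some $z \in \R^n$ such that $(X,z) \in \SH_n$ and $\diag(z) \in C$. By the definition \eqref{eq:SH} of $\SH_n$, I can write $X$ as a finite convex combination $X = \sum_i \lambda_i Q_i^T \diag(z) Q_i$ with each $Q_i \in O(n)$ (or as a limit of such, by closedness considerations if needed). Since $\diag(z) \in C$ and $C$ is invariant under orthogonal conjugation, each $Q_i^T \diag(z) Q_i$ lies in $C$. Convexity of $C$ then gives $X \in C$.

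For the inclusion $\subseteq$, I would start with $X \in C$ and let $z = \lambda(X)$ be the vector of its eigenvalues in weakly decreasing order, so that $z$ automatically satisfies the ordering condition in the definition of $\SH_n$. The spectral theorem yields an orthogonal $Q$ with $X = Q^T \diag(z) Q$, and this exhibits $X$ as a (trivial) convex combination of a matrix of the form $Q^T \diag(z) Q$, hence $(X,z) \in \SH_n$. It remains to see that $\diag(z) \in C$: since $\diag(z) = Q X Q^T$ and $C$ is invariant under orthogonal conjugation, this follows directly from $X \in C$.

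There is no substantive obstacle here: the statement essentially repackages the spectral theorem together with the two invariance properties of $C$, and all the work has been front-loaded into the definition of the Schur-Horn cone. The only mild subtlety is making sure the convex-combination representation of a point in $\SH_n$ is actually finite (or, if defined as a closed convex hull, handling a limit and appealing to closedness of $C$ if $C$ is assumed closed, or else noting that the convex hull of the orbit $\{Q^T \diag(z) Q : Q \in O(n)\}$ is already closed because the orbit is compact). Either way, the argument in the $\supseteq$ direction goes through verbatim.
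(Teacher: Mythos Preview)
Your proof is correct and follows essentially the same approach as the paper: both directions use the spectral theorem for $\subseteq$ (taking $z=\lambda(X)$ and recovering $\diag(z)$ as an orthogonal conjugate of $X$) and orthogonal invariance plus convexity for $\supseteq$ (the orbit $\{Q^T\diag(z)Q\}$ lies in $C$, so its convex hull does too). Your remark about finiteness of the convex combination is unnecessary here, since the paper defines $\SH_n$ via the ordinary convex hull and the containment $\textup{conv}\{Q^T\diag(z)Q\}\subseteq C$ follows directly from convexity without appealing to Carath\'eodory or closedness.
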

    \begin{proof}
        Suppose $X\in C$. Take $z = \lambda(X)$, the ordered vector of
        eigenvalues of $X$.  Then there is some $Q\in O(n)$ such that $X =
        Q^T\diag(\lambda(X))Q$ so $(X,\lambda(X))\in \SH_n$.  By the orthogonal
        invariance of $C$, $X\in C$ implies that $QXQ^T = \diag(\lambda(X))\in
        C$.
        
        For the reverse inclusion, suppose there is $z\in \R^n$ such that
        $(X,z)\in \SH_n$ and $\diag(z)\in C$.  Then by the orthogonal
        invariance of $C$, $Q^T\diag(z)Q\in C$ for all $Q\in O(n)$.  Since $C$
        is convex, $\textup{conv}_{Q\in O(n)}\{Q^T\diag(z)Q\} \subseteq C$.
        Hence $(X,z)\in \SH_n$ implies that 
        \[ X\in \textup{conv}_{Q\in O(n)}\{Q^T\diag(z)Q\} \subseteq C.\]
    \end{proof}
    The first statement in Proposition~\ref{prop:btn} follows from
    Lemma~\ref{lem:sym} by recalling that $\psdcone{n}{k}$ is orthogonally
    invariant and $\orthant{n}{k} = \{z\in \R^n: \diag(z)\in \psdcone{n}{k}\}$.

    Proving the remainder of Proposition~\ref{prop:btn} then reduces to
    establishing the correctness of the stated semidefinite representation of
    $\SH_n$. This can be deduced from the following two well-known results.
    \begin{lemma}
        \label{lem:maj}
        If $\lambda(X)$ is ordered so that $\lambda_1(X)\geq \cdots \geq
        \lambda_{n}(X)$ then $(X,z)\in\SH_n$ if and only if $z_1\geq z_2 \geq
        \cdots \geq z_n$, 
        \[
            \tr(X) = \sum_{i=1}^{n}\lambda_i(X) = \sum_{i=1}^{n}z_i,\quad\text{and}\quad
            \sum_{i=1}^{\ell}\lambda_i(X) \leq \sum_{i=1}^{\ell}z_i \quad\text{for $\ell=1,2,\ldots,n-1$}.\]
    \end{lemma}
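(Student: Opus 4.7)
The plan is to establish Lemma~\ref{lem:maj} by deriving each direction from classical ingredients: the forward direction from Ky Fan's variational characterization of partial sums of eigenvalues, and the reverse direction from the combination of the Schur-Horn theorem with Birkhoff's theorem on doubly stochastic matrices.

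For the forward direction, I would start from $(X,z)\in \SH_n$, write $X=\sum_j \mu_j Q_j^T\diag(z)Q_j$ as a convex combination of orthogonal conjugates of $\diag(z)$, and note that each such conjugate has trace $\sum_i z_i$ and ordered spectrum $z$. Linearity of trace immediately yields $\tr(X)=\sum_i z_i$. For the partial-sum inequalities, I would invoke Ky Fan's maximum principle $\sum_{i=1}^{\ell}\lambda_i(Y)=\max_{U^TU=I_\ell}\tr(U^TYU)$, applied to $X$, then pull the sum through the maximum and use that $\max_U \tr(U^T (Q_j^T\diag(z)Q_j) U)=\sum_{i=1}^{\ell} z_i$ for every $j$. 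The convex combination of these identical upper bounds closes the inequality.

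For the reverse direction, assume $z$ is weakly decreasing, $\tr(X)=\sum_i z_i$, and the partial-sum inequalities hold, i.e.\ $\lambda(X)$ is majorized by $z$. The plan is to produce $X$ explicitly as a convex combination of conjugates of $\diag(z)$. By the classical equivalence between majorization and doubly stochasticity, there is a doubly stochastic matrix $D$ with $\lambda(X)=Dz$. Birkhoff's theorem then writes $D=\sum_\pi \mu_\pi P_\pi$ as a convex combination of permutation matrices, and since conjugating $\diag(z)$ by a permutation matrix simply permutes its diagonal, this gives $\diag(\lambda(X))=\sum_\pi \mu_\pi P_\pi^T\diag(z)P_\pi$. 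Finally, choosing a spectral decomposition $X=S^T\diag(\lambda(X))S$ with $S\in O(n)$, I would conjugate the preceding identity by $S$ to obtain $X=\sum_\pi \mu_\pi (P_\pi S)^T\diag(z)(P_\pi S)$, certifying $(X,z)\in \SH_n$.

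Since every ingredient (Ky Fan's principle, the Schur-Horn theorem, Birkhoff's theorem) is entirely standard, I do not expect a genuine obstacle, which is consistent with the paper's treatment of the lemma as known. The most substantive classical input is Schur-Horn itself; given that, the convex-hull statement drops out by combining with Birkhoff, so a careful writeup reduces to assembling these results and bookkeeping the ordering conventions on $z$ and $\lambda(X)$.
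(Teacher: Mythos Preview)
Your proposal is correct. The paper does not actually prove Lemma~\ref{lem:maj}; it simply states the majorization reformulation and cites \cite[Corollary~3.2]{sanyal2011orbitopes} for the result. Your argument via Ky~Fan's maximum principle for the forward direction and the Hardy--Littlewood--P\'olya/Birkhoff route for the converse is a standard and complete way to supply the omitted proof.

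One small terminological remark: in your reverse direction you do not actually invoke the Schur--Horn theorem (which concerns diagonals of Hermitian matrices versus their spectra). What you use is the Hardy--Littlewood--P\'olya equivalence ``$\lambda$ is majorized by $z$ iff $\lambda = Dz$ for some doubly stochastic $D$,'' followed by Birkhoff. This is exactly the right ingredient; only the label in your final paragraph is slightly off.
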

    In other words $(X,z)\in \SH_n$ if and only if $z$ is weakly decreasing and
    $\lambda(X)$ is \emph{majorized} by $z$. This is discussed, for example,
    in~\cite[Corollary 3.2]{sanyal2011orbitopes}. To turn this characterization
    into a semidefinite representation, it suffices to have semidefinite
    representations of the epigraphs of the convex functions $\sumk_{\ell}(X)
    := \sum_{i=1}^{\ell} \lambda_{i}(X)$. These are given by Nesterov and
    Nemirovski in~\cite[Section 6.4.3, Example 7]{nesterov1993interior}.
    \begin{lemma}
        \label{lem:sumk}
        If $2\leq \ell \leq n-1$, the epigraph of the convex function
        $\sumk_{\ell}(X) = \sum_{i=1}^{\ell}\lambda_i(X)$ has a semidefinite
        representation of size $O(n)$ given by
        \[\{(X,t): \sumk_{\ell}(X) \leq t\} = \{(X,t): \exists s\in \R,\; Z \in \Sym^n\quad\text{s.t.}\quad
                Z \psd 0,\;\;X \nsd Z+sI,\;\;\tr(Z)+s\,\ell \leq t\}.\]
        The epigraph of $\sumk_1(X)$ has a simpler semidefinite representation
        as
        \[ \{(X,t):\sumk_1(X)\leq t\} = \{(X,t): X\nsd tI\}.\]
    \end{lemma}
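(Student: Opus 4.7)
The plan is to verify the stated set equalities directly, handling the trivial case $\ell = 1$ separately and the general case $2 \leq \ell \leq n-1$ by proving both inclusions. For $\ell = 1$, $\sumk_1(X) = \lambda_1(X)$ is simply the largest eigenvalue, and $\lambda_1(X) \leq t$ holds if and only if $v^T X v \leq t\,v^T v$ for every vector $v$, which is precisely the condition $X \nsd tI$.

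For the main case $2 \leq \ell \leq n-1$, I would first prove the forward inclusion by exhibiting explicit witnesses derived from an eigendecomposition $X = \sum_{i=1}^{n} \lambda_i(X) v_i v_i^T$ with $\lambda_1(X) \geq \cdots \geq \lambda_n(X)$. The natural choice is
\[ s = \lambda_\ell(X), \qquad Z = \sum_{i=1}^{\ell-1} (\lambda_i(X) - \lambda_\ell(X))\, v_i v_i^T. \]
Then $Z \psd 0$ because each coefficient is non-negative, and using the resolution of identity $\sum_{i} v_i v_i^T = I$ a short computation gives
\[ Z + sI - X = \sum_{i=\ell+1}^{n} (\lambda_\ell(X) - \lambda_i(X))\, v_i v_i^T \psd 0, \]
so $X \nsd Z + sI$. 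Finally $\tr(Z) + s\ell = \sum_{i=1}^{\ell-1}(\lambda_i(X) - \lambda_\ell(X)) + \ell \lambda_\ell(X) = \sum_{i=1}^{\ell} \lambda_i(X) = \sumk_\ell(X) \leq t$.

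For the reverse inclusion, suppose $(s, Z)$ satisfies the stated constraints. Weyl's monotonicity principle (an immediate consequence of the Courant--Fischer min-max characterization) says that $A \nsd B$ implies $\lambda_i(A) \leq \lambda_i(B)$ for every $i$. Applied to $X \nsd Z + sI$, using $\lambda_i(Z + sI) = \lambda_i(Z) + s$, this gives $\lambda_i(X) \leq \lambda_i(Z) + s$. Summing for $i = 1, \ldots, \ell$ and noting that $\lambda_i(Z) \geq 0$ since $Z \psd 0$ yields
\[ \sumk_\ell(X) \leq \sumk_\ell(Z) + s\ell \leq \tr(Z) + s\ell \leq t. \]

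The size bound $O(n)$ is immediate: the representation introduces one auxiliary symmetric matrix variable $Z \in \Sym^n$ and one scalar $s$, constrained by two $n \times n$ linear matrix inequalities together with one scalar linear inequality, which can be combined into a single block-diagonal LMI of dimension $2n + 1$. The only non-mechanical step is guessing the witness $s = \lambda_\ell(X)$ in the forward inclusion; once that is in hand, both directions reduce to elementary manipulations of eigenvalue inequalities and there is no real obstacle.
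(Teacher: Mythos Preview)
Your proof is correct. The paper does not actually prove this lemma; it simply cites Nesterov and Nemirovski~\cite[Section 6.4.3, Example 7]{nesterov1993interior} for the result, so your direct argument via explicit witnesses $s=\lambda_\ell(X)$, $Z=\sum_{i<\ell}(\lambda_i(X)-\lambda_\ell(X))v_iv_i^T$ for one inclusion and Weyl monotonicity for the other is exactly the standard verification and more than what the paper provides.
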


    We now turn to the relationship between $(\psdcone{n}{k})^*$ and
    $(\orthant{n}{k})^*$.  Note that $(\psdcone{n}{k})^*$ is invariant under
    orthogonal conjugation. So the claim (Proposition~\ref{prop:btn-dual}) that
    \[ (\psdcone{n}{k})^* = \{Y\in \Sym^n: \exists w\in \R^n\;\;\text{s.t.}\;\;
            w\in (\orthant{n}{k})^*,\;\;(Y,w)\in \SH_n\} \]
    would follow from Lemma~\ref{lem:sym} once we know that the diagonal slice
    of $(\psdcone{n}{k})^*$ is $(\orthant{n}{k})^*$.  This is a special case of
    the following result for which we give a direct proof.
    \begin{lemma}
        \label{lem:lewis}
        Suppose $C\subset \Sym^n$ is a convex cone that is invariant under
        orthogonal conjugation. Then 
        \begin{equation}
            \{y\in \R^n:\diag(y)\in C^*\}  = \{z\in \R^n: \diag(z)\in C\}^*.
            \label{eq:dsliceprojdual}
     \end{equation}
    \end{lemma}
    Note that if $C = \psdcone{n}{k}$ then the left hand side of~\eqref{eq:dsliceprojdual} is the
    diagonal slice of $(\psdcone{n}{k})^*$ and the right hand side is
    $(\orthant{n}{k})^*$. 
    \begin{proof}
        We use a description of the orthogonal projector onto the subspace of diagonal matrices
        as an average of orthogonal conjugations
        (see, e.g.,~\cite{miranda1994group} where the idea is attributed to Olkin).  
        For every subset $I\subset\{1,2,\ldots,n\}$ let $\Delta_I$ denote the
        diagonal matrix with $[\Delta_{I}]_{ii} = 1$ if $i\in I$ and
        $[\Delta_I]_{ii} = -1$ otherwise. The $\Delta_{I}$ are all orthogonal
        and act on symmetric matrices by $X\mapsto \Delta_I X \Delta_I^T$. A
        symmetric matrix is fixed by the action of all the $\Delta_I$ if and
        only if it is diagonal. Hence $\diag(\diag(X))$, the orthogonal
        projection of a symmetric matrix $X$ onto the subspace of diagonal
        matrices (the fixed-point subspace), is given by averaging over the
        action of the $\Delta_I$, i.e.\
        \begin{equation}
            \label{eq:projform}
            \diag(\diag(X)) = \frac{1}{2^{n}}\sum_{I} \Delta_I X \Delta_I^T
        \end{equation}
        where the sum is over all $2^n$ subsets of $\{1,2,\ldots,n\}$. 
        
        We now prove that
        $\{\diag(X): X\in C\}  = \{x\in \R^{n}: \diag(x)\in C\}$.
    Observe that the diagonal slice of $C$ is certainly contained 
    in the diagonal projection of $C$ giving one inclusion. For the
        other, suppose $X\in C$ is arbitrary. Since $C$ is
        orthogonally invariant, each $\Delta_I X \Delta_I^T$ is an element of
        $C$. Since $C$ is convex, it follows from~\eqref{eq:projform} that
        $\diag(\diag(X))\in C$ and is diagonal as we require.
        
        To prove~\eqref{eq:dsliceprojdual}, we apply
        Lemma~\ref{lem:dual-general} in Section~\ref{sec:duals} to obtain an 
        expression for $\{x\in \R^n:\diag(x)\in C\}^*$. For
        Lemma~\ref{lem:dual-general} to apply, we must exhibit $x_0\in \R^n$
        such that $\diag(x_0)$ is in the relative interior of $C$, denoted 
        $\textup{relint}(C)$. Let $X_0\in \textup{relint}(C)$ be arbitrary. Since $C$ is invariant
        under orthogonal conjugation, the same holds for $\textup{relint}(C)$.
        It follows that each $\Delta_I X_0 \Delta_I^T\in \textup{relint}(C)$ and by~\eqref{eq:projform}
        (and the convexity of $\textup{relint}(C)$) it follows that $\diag(\diag(X_0))\in \textup{relint}(C)$. 
        As such it suffices to take $x_0 = \diag(X_0)$.
    \end{proof}

\section{Concluding remarks}
\label{sec:conclusion}
We conclude with some comments about (the possibility of) simplifying our
representations and some open questions.

\subsection{Simplifications}
\label{sec:simplifications}
If we can simplify a representation of $\orthant{n}{k}$ or $\psdcone{n}{k}$ for
some $k=i$, that allows us to simplify the derivative-based representations for
$k\geq i$ and the polar derivative-based representations for $k \leq i$.  For
example $\orthant{n}{n-2}$ can be succinctly expressed in terms of the
second-order cone $\soc_+^{n+1} = \{x\in \R^{n+1}: (\sum_{i=1}^{n}x_i^2)^{1/2}
    \leq x_{n+1}\}$ as
\[ \orthant{n}{n-2} = \{x\in \R^{n}: (x,\elem_1(x))\in \soc_+^{n+1}\}.\]
Then we can represent $\psdcone{n}{n-2}$ in terms of the second-order cone as 
\[ \psdcone{n}{n-2} = \{Z\in \Sym^{n}:(Z,\tr(Z))\in \soc_+^{n^2+1}\}\]
because $\tr(Z) = \sum_{i=1}^n\lambda_i(Z)$ and $\sum_{i,j=1}^{n}Z_{ij}^2 =
\sum_{i=1}^{n}\lambda_i(Z)^2$.  This should be used as a base case instead of
$\psdcone{n}{n-1}$ in the polar derivative-based representations.  

As an example of this, Proposition~\ref{prop:RS2} can be used to give a concise
representation of $\orthant{n}{n-3}$ in terms of the second-order cone as 
\begin{align*}
    x\in \orthant{n}{n-3} \quad \Longleftrightarrow \quad& 
    \exists Z\in \Sym^{n-1}\;\;\text{such that}\\
& \diag(x) \psd V_n Z V_n^T \;\;\text{and}\;\;
(Z,\tr(Z))\in \soc_+^{(n-1)^2+1}.
    \end{align*}
 
\subsection{Lower bounds on the size of representations}
\label{sec:lower-bounds}
The explicit constructions given in this paper establish upper bounds on the
minimum size of semidefinite representations of $\psdcone{n}{k}$ and
$\orthant{n}{k}$.  To assess how good our representations are, it is
interesting to establish corresponding \emph{lower} bounds on the size of
semidefinite representations of $\orthant{n}{k}$ and $\psdcone{n}{k}$.  Since
$\orthant{n}{k}$ is a slice of $\psdcone{n}{k}$, any lower bound on the size of
a semidefinite representation of $\orthant{n}{k}$ also provides a lower bound
on the size of a semidefinite representation of $\psdcone{n}{k}$. Hence we
focus our discussion on $\orthant{n}{k}$. 

In the case of $\orthant{n}{n-1}$, a halfspace, the obvious
semidefinite representation of size one is clearly of minimum size. Less
trivial is the case of $\orthant{n}{0}$, the non-negative orthant. It has been
shown by Gouveia et al.~\cite[Section 5]{gouveia2013lifts} that $\R^n_+$ does
not admit a semidefinite representation of size smaller than $n$. Hence the
obvious representation of $\R^n_+$ as the restriction of $\Sym_+^n$ to the
diagonal is of minimum size. 

For each $k$, the slice of $\orthant{n}{k}$ obtained by setting the last $k$
variables to zero is $\R^{n-k}_+$.  Hence any semidefinite representation of
$\orthant{n}{k}$ has size at least $n-k$, the minimum size of a semidefinite
representation of $\R^{n-k}_+$.  This argument establishes that Sanyal's
spectrahedral representation of $\orthant{n}{1}$ of size $n-1$ is actually a
minimum size semidefinite representation of $\orthant{n}{1}$. We are not aware
of any other lower bounds on the size of semidefinite representations of the
cones $\orthant{n}{k}$ for $2\leq k \leq n-2$. 

The semidefinite representations of $\orthant{n}{k}$ given in this paper are
\emph{equivariant} in that they appropriately preserve the symmetries of
$\orthant{n}{k}$. (For a precise definition see 
\cite[Definition 4]{gouveia2013lifts}.) It is known  that symmetry matters when representing
convex sets as projections of other convex sets \cite{kaibel2010symmetry}. For
example if $p$ is a power of a prime, equivariant representations of regular
$p$-gons in $\R^2$ are necessarily much larger than their minimum-sized
non-equivariant counterparts \cite[Proposition 3]{gouveia2013lifts}.  Given
that the cones $\orthant{n}{k}$ are highly symmetric, it would also be
interesting to establish lower bounds on the size of equivariant semidefinite
representations of the derivative relaxations of the non-negative orthant.

\bibliographystyle{plain}
\bibliography{hyp_bib}

\begin{thebibliography}{10}

\bibitem{frgbook}
G.~Blekherman, P.~A. Parrilo, and R.~R. Thomas, editors.
\newblock {\em Semidefinite Optimization and Convex Algebraic Geometry}.
\newblock MOS-SIAM Series on Optimization. SIAM, Philadelphia, 2013.

\bibitem{branden2014hyperbolicity}
P.~Br{\"a}nd{\'e}n.
\newblock Hyperbolicity cones of elementary symmetric polynomials are
  spectrahedral.
\newblock {\em Optim.~Lett.}, 8(5):1773--1782, 2014.

\bibitem{choe2004homogeneous}
Y.-B. Choe, J.~G. Oxley, A.~D. Sokal, and D.~G. Wagner.
\newblock Homogeneous multivariate polynomials with the half-plane property.
\newblock {\em Adv.~Appl.~Math.}, 32(1--2):88--187, 2004.

\bibitem{gouveia2011positive}
J.~Gouveia and T.~Netzer.
\newblock Positive polynomials and projections of spectrahedra.
\newblock {\em SIAM J.~Optim.}, 21(3):960, 2011.

\bibitem{gouveia2013lifts}
Jo{\~a}o Gouveia, Pablo~A Parrilo, and Rekha~R Thomas.
\newblock Lifts of convex sets and cone factorizations.
\newblock {\em Math. Oper. Res.}, 38(2):248--264, 2013.

\bibitem{garding1959inequality}
L.~G\r{a}rding.
\newblock An inequality for hyperbolic polynomials.
\newblock {\em J.~Math.~Mech}, 8(6):957--965, 1959.

\bibitem{guler1997hyperbolic}
O.~G{\"u}ler.
\newblock Hyperbolic polynomials and interior point methods for convex
  programming.
\newblock {\em Math.~Oper.~Res.}, 22(2):350--377, 1997.

\bibitem{helton2007linear}
J.~W. Helton and V.~Vinnikov.
\newblock Linear matrix inequality representation of sets.
\newblock {\em Comm.~Pure Appl.~Math}, 60(5):654--674, 2007.

\bibitem{kaibel2010symmetry}
V.~Kaibel, K.~Pashkovich, and D.~Theis.
\newblock Symmetry matters for the sizes of extended formulations.
\newblock In {\em Integer Programming and Combinatorial Optimization}, volume
  6080 of {\em Lecture Notes in Comput.~Sci.}, pages 135--148. Springer Berlin,
  2010.

\bibitem{lewis2005lax}
A.~S. Lewis, P.~A. Parrilo, and M.~V. Ramana.
\newblock The {L}ax conjecture is true.
\newblock {\em Proc.~Amer.~Math.~Soc.}, 133(9):2495--2500, 2005.

\bibitem{lofberg2004yalmip}
J.~L\"{o}fberg.
\newblock {YALMIP}: A toolbox for modeling and optimization in {MATLAB}.
\newblock In {\em Proceedings of the CACSD Conference}, Taipei, Taiwan, 2004.

\bibitem{miranda1994group}
H.~F. Miranda and R.~C. Thompson.
\newblock Group majorization, the convex hulls of sets of matrices, and the
  diagonal element-singular value inequalities.
\newblock {\em Linear Alg. Appl.}, 199:131--141, 1994.

\bibitem{nemirovski2006advances}
A.~Nemirovski.
\newblock Advances in convex optimization: conic programming.
\newblock In {\em Proceedings of the International Congress of Mathematicians:
  Madrid, August 22--30, 2006: invited lectures}, pages 413--444, 2006.

\bibitem{nemirovski2001lectures}
A.~Nemirovski and A.~Ben-Tal.
\newblock {\em Lectures on Modern Convex Optimization: Analysis, Algorithms and
  Engineering Applications}.
\newblock MOS-SIAM Series on Optimization. SIAM, 2001.

\bibitem{nesterov1993interior}
Yu. Nesterov and A.~Nemirovskii.
\newblock {\em Interior Point Polynomial Algorithms in Convex Programming},
  volume~13 of {\em SIAM Studies in Applied Mathematics}.
\newblock SIAM, Philadelphia, 1993.

\bibitem{nie2008semidefinite}
J.~Nie, P.~A. Parrilo, and B.~Sturmfels.
\newblock Semidefinite representation of the $k$-ellipse.
\newblock In {\em Algorithms in Algebraic Geometry}, volume 146 of {\em The IMA
  Volumes in Mathematics and its Applications}, pages 117--132. Springer New
  York, 2008.

\bibitem{ramana1995some}
M.~Ramana and A.~J. Goldman.
\newblock Some geometric results in semidefinite programming.
\newblock {\em J.~Global Optim.}, 7(1):33--50, 1995.

\bibitem{renegar2006hyperbolic}
J.~Renegar.
\newblock Hyperbolic programs, and their derivative relaxations.
\newblock {\em Found.~Comput.~Math.}, 6:59--79, 2006.

\bibitem{rockafellar1997convex}
R.~T. Rockafellar.
\newblock {\em Convex analysis}, volume~28.
\newblock Princeton university press, 1997.

\bibitem{sanyal2011orbitopes}
R.~Sanyal, F.~Sottile, and B.~Sturmfels.
\newblock Orbitopes.
\newblock {\em Mathematika}, 57(02):275--314, 2011.

\bibitem{sanyal2013derivative}
Raman Sanyal.
\newblock On the derivative cones of polyhedral cones.
\newblock {\em Adv.~Geom.}, 13(2):315--321, 2013.

\bibitem{toh1999sdpt3}
K.~C. Toh, M.~J. Todd, and R.~H. T{\"u}t{\"u}nc{\"u}.
\newblock {SDPT3---a MATLAB software package for semidefinite programming,
  version 1.3}.
\newblock {\em Optim.~Methods Softw.}, 11(1-4):545--581, 1999.

\bibitem{zinchenko2008hyperbolicity}
Y.~Zinchenko.
\newblock On hyperbolicity cones associated with elementary symmetric
  polynomials.
\newblock {\em Optim.~Lett.}, 2(3):389--402, 2008.

\end{thebibliography}
\end{document}